\providecommand{\U}[1]{\protect\rule{.1in}{.1in}}
\theoremstyle{plain}
\newtheorem{corollary}{Corollary}
\newtheorem{lemma}{Lemma}
\newtheorem{proposition}{Proposition}
\newtheorem{remark}{Remark}
\newtheorem{theorem}{Theorem}
\newtheorem*{theorem*}{Theorem}
\numberwithin{equation}{section}
\newcommand{\disp}{\displaystyle}
\newcommand{\eps}{\varepsilon}
\newcommand{\vp}{\varphi}
\newcommand{\al}{\alpha}
\newcommand{\be}{\beta}
\newcommand{\te}{\theta}
\newcommand{\om}{\omega}
\newcommand{\Om}{\Omega}
\newcommand{\si}{\sigma}
\newcommand{\iny}{\infty}
\newcommand{\su}{\subset}
\newcommand{\LP}{\Delta}
\newcommand{\gr}{\nabla}
\newcommand{\norm}[1]{\left\| #1\right\|}
\newcommand{\innp}[1]{\left< #1 \right>}
\newcommand{\abs}[1]{\left\vert#1\right\vert}
\newcommand{\set}[1]{\left\{#1\right\}}
\newcommand{\brac}[1]{\left[#1\right]}
\newcommand{\pr}[1]{\left( #1 \right) }
\newcommand{\pb}[1]{\left( #1 \right] }
\newcommand{\R}{\ensuremath{\mathbb{R}}}
\newcommand{\C}{\ensuremath{\mathbb{C}}}
\begin{document}
\title{Quantitative unique continuation for Schr\"odinger operators}

\author{ Blair Davey}
\address{
Department of Mathematics\\
The City College of New York\\
New York, NY 10031, USA\\
Email: bdavey@ccny.cuny.edu }
\thanks{\noindent{Davey is supported in part by the Simons Foundation Grant number 430198.}}
\subjclass[2010]{35J15, 35J10, 35A02.}
\keywords {Carleman estimates, unique continuation, singular lower order terms, vanishing order}

\begin{abstract}
We investigate the quantitative unique continuation properties of solutions to second order elliptic equations with singular lower order terms.
The main theorem presents a quantification of the strong unique continuation property for $\LP + V$. 
That is, for any non-trivial $u$ that solves $\LP u + V u = 0$ in some open, connected subset of $\R^n$, we estimate the vanishing order of solutions in terms of the $L^t$-norm of $V$.
Our results apply to all $t > \frac n 2$ and $n \ge 3$. 
With these maximal order of vanishing estimates, we employ a scaling argument to produce quantitative unique continuation at infinity estimates for global solutions to $\LP u + V u = 0$.
To handle $V \in L^t$ for every $t \in \pb{\frac n 2, \iny}$, we prove a novel $L^p - L^q$ Carleman estimate by interpolating a known $L^p - L^2$ estimate with a new endpoint Carleman estimate.
This new Carleman estimate may also be used to establish improved order of vanishing estimates for equations with a first order term, those of the form $\LP u + W \cdot \gr u + V u = 0$.
\end{abstract}

\maketitle

\section{Introduction}

In this article, we study the quantitative unique continuation properties of solutions to elliptic equations with singular lower order terms.
More specifically, we establish bounds for the {\em order of vanishing} and the {\em optimal rate of decay at infinity} of solutions to equations of the form $\LP u + V u = 0$, where $V \in L^t$ for any $t \in \pb{\frac n 2, \iny}$ and $n \ge 3$.
The order of vanishing results may be interpreted as a quantification of the strong unique continuation property.
The estimates for the rate of decay at infinity follow from the order of vanishing estimates via the scaling argument of Bourgain and Kenig \cite{BK05}, but also relate to Landis' conjecture.
We consider elliptic equations with a non-trivial first order term as well.

Recall that a partial differential operator $L$ is said to have the {\em unique continuation property} (UCP) if whenever $u$ is a solution to $L u = 0$ in some domain $\Om$ and $u \equiv 0$ on an open subset of $\Om$, then it necessarily follows that $u \equiv 0$ throughout $\Om$ as well.
Further, we say that $L$ has the {\em strong unique continuation property} (SUCP) if whenever $u$ is a solution to $L u = 0$ in $\Om$ and there exists $x_0 \in \Om$ at which $u$ vanishes to infinite order, then $u \equiv 0$ in $\Om$.
Depending on the underlying function space, there are different ways to interpret what it means for a solution to vanish to infinite order at a point.
The study of (strong) unique continuation for the operator $\LP + V$, where $V \in L^t$, has an extensive history.
The most notable result is the theorem of Jerison and Kenig \cite{JK85} in which they establish that the strong unique continuation property holds for $V \in L^{n/2}$ for every $n \ge 3$.

Here we consider solutions to equations of the form $\LP u + V u = 0$ in $\Om \su \R^n$, where $n \ge 3$ and $V \in L^t$ for any $t \in \pb{\frac n 2, \iny}$.
By the result of Jerison and Kenig \cite{JK85}, we know that all such solutions satisfy the SUCP, and therefore, only trivial solutions may vanish to infinite order at a point in the domain.
As such, we want to quantify the SUCP by finding the maximal rate at which a non-trivial solution can vanish.
We formulate this as follows: If $u$ is a bounded, normalized solution to $\LP u + V u = 0$, we seek a lower bound of the form
$$\norm{u}_{L^\iny\pr{B_r}} \gtrsim r^{\be} \;\; \text{ as } \;\; r \to 0,$$
where $\be$ is some function that encapsulates information about the operator, $\LP + V$.
Our Theorem \ref{OofV} below gives this quantification.

While order of vanishing estimates provide local information about unique continuation, these estimates, in combination with the scaling argument from \cite{BK05}, may be used to establish global unique continuation results.
In fact, this technique has been extensively used in the study of Landis' conjecture (see, for example, \cite{BK05}, \cite{Dav14}, \cite{LW14}, \cite{KSW15}, \cite{DKW17}, \cite{DW17}, \cite{DKW18}, \cite{Dav18u}).
Landis' conjecture states that if $u$ and $V$ are bounded functions that satisfy $\LP u + V u = 0$ in $\R^n$, and $\abs{u\pr{x}} \lesssim \exp\pr{- c \abs{x}^{1+}}$, then $u \equiv 0$.
In Theorem \ref{UCV}, we establish a quantitative Landis-type theorem for any $V \in L^t$, $t \in \pb{\frac n 2, \iny}$.

This article continues the work that was initiated in \cite{DZ17} and \cite{DZ18} in which we considered equations of the form $\LP u + W \cdot \gr u + V u = 0$, where both of the potential functions are assumed to be singular.
The results of \cite{DZ17} apply to $n \ge 3$, while \cite{DZ18} restricts to $n = 2$.
In this article, we present improvements to some of the results presented in \cite{DZ17}.
When $W \equiv 0$, the results of \cite{DZ17} only apply to $V \in L^t$ where $t > \frac{4 n^2}{7n+2}$.
To close the gap between $\frac n 2$ and $\frac{4 n^2}{7n+2}$, we prove a new Carleman estimate.
The approach is to interpolate between the $L^p - L^2$ Carleman estimate from \cite{DZ17} and a new $L^p - L^{\frac{2n}{n-2}}$ estimate that we establish here using new ideas.

Although the motivation behind this article was to close the gap between $\frac n 2$ and $\frac{4 n^2}{7n+2}$ for admissable $t$-values, the new Carleman estimate here also applies to operators with a first-order term.
As such, our Theorem \ref{OofVW} establishes order of vanishing estimates for solutions to equations of the form $\LP u + W \cdot \gr u + V u = 0$, where we assume that $V \in L^t$ and $W \in L^s$.
Compared to the results in \cite{DZ17}, this theorem improves upon the rate of decay in some cases, but doesn't increase the range of $s$- or $t$-values.
Moreover, the improvement is not sufficient enough to imply a better unique continuation at infinity result than the one presented in \cite{DZ17}.

Our main focus is on solutions to elliptic equations of the form
\begin{equation}
\LP u+V u=0.
\label{goal}
\end{equation}
As is standard, we use the notation $B_r\pr{x}$ to denote a ball of radius $r$ centered at the point $x$.
When the center is understood from the context, we simply write $B_r$.

A priori, we assume that $u \in W^{1,2}_{loc}\pr{B_R}$ is a weak solution to \eqref{goal} in $B_R \su \R^n$ for $n \ge 3$.
H\"older and Sobolev inequalities imply that there exists a $p \in \pb{\frac{2n}{n+2}, 2}$, depending on  $t$, such that $V u \in L^p_{loc}\pr{B_R}$.
By regularity theory, it follows that $u \in W_{loc}^{2,p}\pr{B_R}$ and therefore, $u$ is a solution to \eqref{goal} almost everywhere in $B_R$.
Moreover, by de Giorgi-Nash-Moser theory, we have that $u \in L^\iny_{loc}\pr{B_R}$.
Therefore, when we say that $u$ is a solution to \eqref{goal} in $B_R$, it is understood that $u$ belongs to $L^\iny_{loc}\pr{B_R} \cap W^{1,2}_{loc}\pr{B_R} \cap W^{2,p}_{loc}\pr{B_R}$ and $u$ satisfies equation \eqref{goal} almost everywhere in $B_R$.
For solutions to equations of the form $\LP u + W \cdot \gr u + V u = 0$, an analogous set of statements can be made, and we interpret the meaning of solution similarly.

The order of vanishing result for $\LP + V$ is as follows.

\begin{theorem}
Let $t \in \pb{\frac{n}{2}, \iny}$. 
Assume that for some $M \ge 1$, $\norm{V}_{L^t\pr{B_{R_0}}} \le M$. 
Let $u : B_{10} \to \C$ be a solution to \eqref{goal} in $B_{10}$ that is bounded and normalized in the sense that
\begin{align}
& \norm{u}_{L^\iny\pr{B_{6}}}\le \hat{C},
\label{bound} \\
& \norm{u}_{L^\iny\pr{B_{1}}}\geq 1.
\label{normal}
\end{align}
Then the maximal order of vanishing for $u$ in  $B_{1}$ is less than $C M^\mu$. 
That is, for any $\eps \in \pr{0, \eps_0}$, any $x_0\in B_1$, and any $r$ sufficiently small,
\begin{align*}
\norm{u}_{L^\iny\pr{B_{r}\pr{x_0}}} &\ge c r^{C M^\mu},
\end{align*}
where 
$C = C\pr{n, t, \hat C, \eps}$,
$\disp \mu = \left\{\begin{array}{ll}
\frac{4t}{6t - 3n + 2} &  t \in \pb{n, \iny} \\
\frac{4nt} {\pr{2t-n}\pr{3n+2}} + \tilde \eps & t \in \pb{\frac{n}{2}, n} 
\end{array}\right.$,
$\tilde \eps = C\pr{n, t, \eps} \eps$, $c=c\pr{n, t, \hat C, \eps}$.
\label{OofV}
\end{theorem}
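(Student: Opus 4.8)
The plan is to prove Theorem~\ref{OofV} via the standard three-ball/Carleman machinery, but powered by the new $L^p$--$L^q$ Carleman estimate that the paper advertises (obtained by interpolating the $L^p$--$L^2$ estimate of \cite{DZ17} with the new endpoint $L^p$--$L^{\frac{2n}{n-2}}$ estimate). First I would fix a base point $x_0 \in B_1$, translate and rescale so that one works on a fixed collection of concentric balls, and record the normalization: by \eqref{bound}--\eqref{normal} there is a point where $|u|$ is comparable to $1$ while $u$ stays bounded by $\hat C$ on $B_6$. By H\"older and Sobolev embedding, $Vu \in L^p_{loc}$ for a suitable $p \in \bp{\frac{2n}{n+2}}{2}$ determined by $t$, and $u \in W^{2,p}_{loc}$, so $|\LP u| = |Vu| \le |V|\,|u|$ pointwise a.e.; this is the inequality the Carleman estimate will be fed.

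The heart of the argument is to apply the Carleman inequality with weight $e^{\tau \phi}$, where $\phi$ is (a regularized version of) $-\log|x|$ or a power-type weight adapted to the geometry, to $w = \eta u$ for a cutoff $\eta$ supported in an annulus. The left-hand side, an $L^q$-norm (with $q = \frac{2n}{n-2}$ after interpolation, or an intermediate exponent) of $e^{\tau\phi} w$, is bounded by $e^{\tau\phi}\LP w$ in $L^p$. The term $e^{\tau\phi} \eta \LP u = e^{\tau\phi}\eta V u$ is absorbed into the left side using H\"older with exponents matched so that $\|V\|_{L^t}$ appears: one needs $\frac{1}{p} = \frac 1 t + \frac 1 q$, which is exactly why the range $t > \frac n2$ corresponds to the availability of the Carleman pair $(p,q)$, and one needs $\tau$ large enough — quantitatively $\tau \gtrsim M^{\mu'}$ for the appropriate exponent $\mu'$ — to beat the constant $M$ coming from $\|V\|_{L^t}$. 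The commutator terms $[\LP,\eta]u = 2\gr\eta\cdot\gr u + u\LP\eta$ are supported where $\eta$ transitions and are controlled by interior Cauchy/Caccioppoli-type estimates together with the $L^\infty$ and $W^{1,2}$ bounds on $u$. What remains is the standard three-sphere bookkeeping: the surviving boundary contributions near radius $1$ (where $|u| \sim 1$) give a lower bound, the contributions near radius $r$ give the quantity we want to bound below, and choosing $\tau \asymp M^{\mu'}\log(1/r)$ optimally — equivalently optimizing the resulting Hadamard-type three-ball inequality $\|u\|_{B_r} \gtrsim \|u\|_{B_1}^{1-\alpha}\|u\|_{B_{\text{mid}}}^{\alpha}$ with $\alpha$ a quantitative function of $\tau$ — yields $\|u\|_{L^\infty(B_r)} \ge c\, r^{CM^\mu}$ with the stated two-case formula for $\mu$.

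The bifurcation in $\mu$ between $t \in \bp{n}{\iny}$ and $t \in \bp{\frac n2}{n}$ comes from which Carleman estimate governs the absorption: for $t \ge n$ the $L^p$--$L^2$ estimate of \cite{DZ17} (or a mild improvement) suffices and gives the clean exponent $\frac{4t}{6t-3n+2}$, whereas for $\frac n2 < t < n$ one is forced onto the interpolated $L^p$--$L^q$ estimate with $q$ strictly between $2$ and $\frac{2n}{n-2}$, and the interpolation parameter degrades the dependence, producing $\frac{4nt}{(2t-n)(3n+2)}$ plus a loss $\tilde\eps$ reflecting that the endpoint estimate is used only up to an $\eps$-room (e.g.\ a $\log$ loss or a slightly subcritical exponent traded for the $\eps$). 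I would carry out the $\tau$-optimization once, in a lemma, parametrized by the Carleman exponents $(p,q)$, and then simply substitute the two admissible pairs.

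The step I expect to be the genuine obstacle is making the absorption of $e^{\tau\phi}Vu$ \emph{quantitative in $M$ with the sharp power}: one must track exactly how the requirement ``$\tau$ large compared to $\|V\|_{L^t}$'' scales, including the precise powers of $\tau$ that the Carleman constant carries on each side (these are weight-dependent and differ for the endpoint estimate), so that the final exponent is $M^\mu$ and not something larger. A secondary delicate point is that the endpoint $L^p$--$L^{\frac{2n}{n-2}}$ Carleman estimate is presumably only available with an $\eps$-loss in some exponent or a logarithmic factor, so in the subcritical range one cannot use it cleanly at $q=\frac{2n}{n-2}$; handling that loss is what forces the $+\tilde\eps$ and the restriction $\eps \in (0,\eps_0)$, and one must check the loss does not contaminate the $t \ge n$ case.
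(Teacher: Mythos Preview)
Your proposal is correct and follows essentially the same route as the paper: derive a Carleman estimate for $\LP + V$ from the new $L^p$--$L^q$ Carleman for $\LP$ by absorbing the $Vu$ term via H\"older with $\frac 1 p - \frac 1 q = \frac 1 t$ (forcing $\tau \gtrsim M^\mu$), feed this into a three-ball inequality with cutoffs, and then run the propagation-of-smallness argument from \cite{DZ17} to extract the order-of-vanishing bound. One small correction to your diagnosis of the $\tilde\eps$-loss: it arises not because the $q = \frac{2n}{n-2}$ endpoint is unavailable, but because the Carleman estimate requires the strict inequality $p > \frac{2n}{n+2}$, and for $t \in \pb{\frac n 2, n}$ the optimal choice would be exactly $p = \frac{2n}{n+2}$; one takes $p = \frac{2n}{n+2-\eps}$ instead, which produces the $\tilde\eps$.
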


\begin{figure}[h]
\begin{center}
\includegraphics[height=2in,width=4in,]{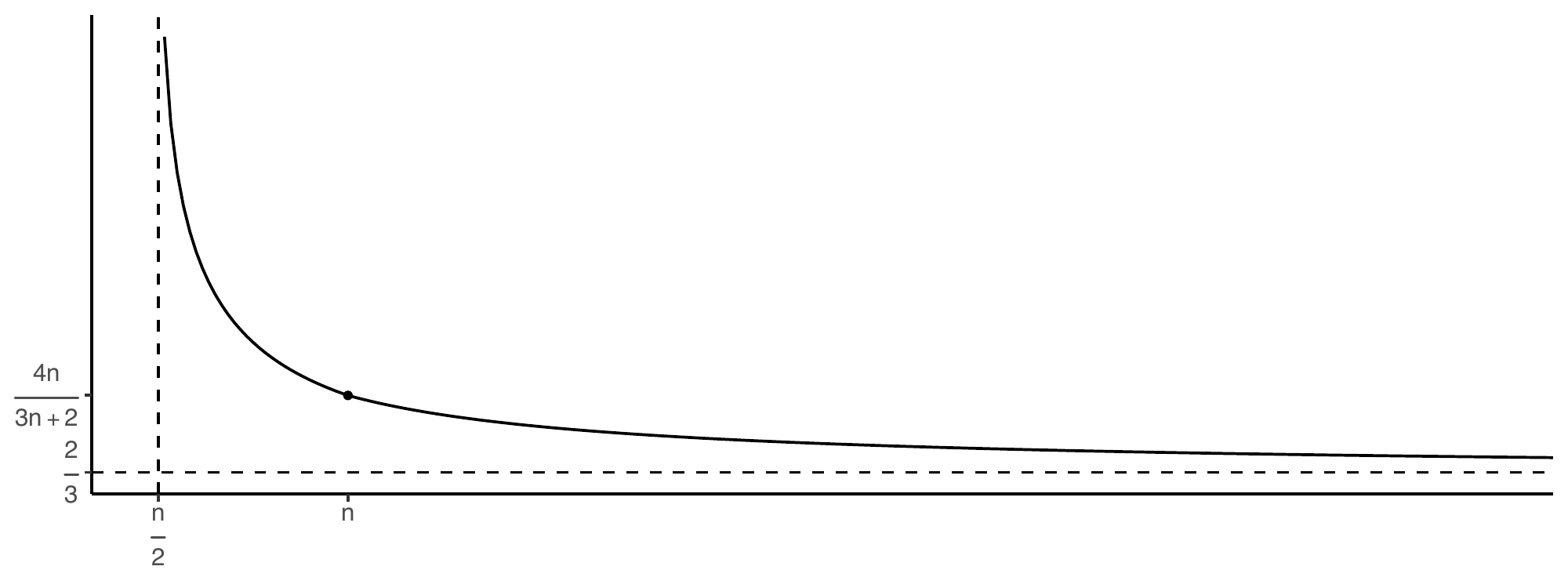}
\caption{The graph of $\mu$ as a function of $t$.
As expected, there is a vertical asymptote at $t = \frac n 2$ and a horizontal asymptote at $\frac 2 3$.}
\end{center}
\end{figure}

We use this order of vanishing estimate, in combination with the scaling argument from \cite{BK05}, to prove the following quantitative unique continuation at infinity theorem for solutions.
This result may be interpreted as a quantitative Landis-type theorem.

\begin{theorem}
Assume that $\norm{V}_{L^t\pr{\R^n}} \le A_0$ for some $t \in \pb{ \frac{n}{2}, \iny}$.
Let $u : \R^n \to \C$ be a solution to \eqref{goal} in $\R^n$ for which $\norm{u}_{L^\iny\pr{\R^n}} \le C_0$ and $\abs{u\pr{0}} \ge 1$.
Then for any $\eps \in \pr{0, \eps_0}$ and any $R$ sufficiently large,
\begin{equation*}
\inf_{\abs{x_0} = R} \norm{u}_{L^\iny\pr{B_1\pr{x_0}}} \ge \exp\pr{-C R^\Pi \log R},
\end{equation*}
where 
$\disp \Pi =  
\left\{\begin{array}{ll}
\frac{4\pr{2t- n}}{6t - 3n + 2} & t\in \pb{n, \iny} \\
\frac{4n} {3n+2} + \tilde \eps & t \in \pb{\frac{n}{2}, n} 
\end{array}\right.$,
$\tilde \eps = C\pr{n, t, \eps} \eps$, $C = C\pr{n, t, A_0, C_0, \eps}$.
\label{UCV}
\end{theorem}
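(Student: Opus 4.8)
The plan is to derive the global decay estimate from the local order-of-vanishing estimate in Theorem \ref{OofV} via the rescaling argument of Bourgain and Kenig \cite{BK05}. Fix $R$ large and a point $x_0$ with $\abs{x_0} = R$. The idea is to move the unit of scale out to distance $\sim R$: set $v(y) = u(10 R y)$ for $y$ ranging over a ball of radius $1$ (so that $x = 10 R y$ ranges over $B_{10R}$), and let $\tilde V(y) = (10R)^2 V(10 R y)$, so that $\LP v + \tilde V v = 0$. The first step is to locate a good base point: since $\norm{u}_{L^\iny(\R^n)} \le C_0$ and $\abs{u(0)} \ge 1$, elliptic estimates (interior gradient bounds from the $W^{2,p}$ theory recalled in the introduction) give a definite lower bound $\norm{u}_{L^\iny(B_\rho(0))} \ge c_1 > 0$ for some fixed small $\rho$, and after translating and rescaling we may assume the normalization \eqref{bound}--\eqref{normal} holds for $v$ with $\hat C$ depending only on $n$ and $C_0$.

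The second step is to track how the potential norm scales. By the change of variables $x = 10 R y$,
\begin{equation*}
\norm{\tilde V}_{L^t(B_{R_0})} = (10R)^2 (10R)^{-n/t} \norm{V}_{L^t(B_{10 R R_0})} \le (10R)^{2 - n/t} A_0 =: M.
\end{equation*}
Since $t > \frac n2$ we have $2 - n/t > 0$, so $M \sim R^{2 - n/t}$ grows polynomially in $R$; in particular $M \ge 1$ for $R$ large. Applying Theorem \ref{OofV} to $v$ with this $M$, and with the vanishing point $y_0 = x_0/(10R)$ (which lies in $B_1$ since $\abs{x_0} = R$), we conclude that for $r$ small,
\begin{equation*}
\norm{v}_{L^\iny(B_r(y_0))} \ge c\, r^{C M^\mu}.
\end{equation*}
The third step is to undo the scaling: choosing $r$ to be a fixed small constant (so that $B_r(10R y_0) = B_r(x_0) \subset B_1(x_0)$ after noting $r$ can be taken $\le 1$, or more carefully choosing $r = 1/(10R)$ to land exactly on the unit ball $B_1(x_0)$ in the $x$ variable), we obtain
\begin{equation*}
\norm{u}_{L^\iny(B_1(x_0))} = \norm{v}_{L^\iny(B_{1/(10R)}(y_0))} \ge c\, (10R)^{-C M^\mu} \ge \exp\pr{- C' M^\mu \log R}.
\end{equation*}
Substituting $M \sim R^{2 - n/t}$ gives $M^\mu \sim R^{(2 - n/t)\mu}$, and a direct computation with the two cases of $\mu$ from Theorem \ref{OofV} yields the stated exponents: for $t \in \pb{n,\iny}$, $(2 - n/t)\mu = \frac{2t-n}{t}\cdot \frac{4t}{6t - 3n + 2} = \frac{4(2t-n)}{6t-3n+2} = \Pi$, and for $t \in \pb{\frac n2, n}$, $(2-n/t)\mu = \frac{2t-n}{t}\pr{\frac{4nt}{(2t-n)(3n+2)} + \tilde\eps} = \frac{4n}{3n+2} + \tilde\eps$ after absorbing the error term. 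Taking the infimum over $\abs{x_0} = R$ finishes the proof, since all constants produced are uniform in $x_0$.

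The main obstacle is the bookkeeping in the rescaling: one must verify that after translating the base point away from $0$ and rescaling, the hypotheses of Theorem \ref{OofV} genuinely hold for $v$ on $B_{10}$ (not just $B_1$) with constants independent of $R$ and $x_0$ — in particular that the normalization \eqref{normal} survives, which requires the quantitative lower bound on $\norm{u}_{L^\iny}$ near $0$ to propagate, and that the vanishing point $y_0$ stays in $B_1$ uniformly. The other delicate point is ensuring that "$r$ sufficiently small" in Theorem \ref{OofV} is compatible with the choice $r \sim 1/R$ needed to recover a unit ball after unscaling; this is where the smallness threshold $r_0$ in Theorem \ref{OofV} must be checked to depend only on the allowed parameters and not force $R$ to be bounded. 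Both are standard in the Bourgain--Kenig framework but need to be stated carefully.
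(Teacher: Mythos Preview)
Your proposal is correct and follows the same Bourgain--Kenig rescaling argument as the paper, which sets $u_R(x) = u(x_0 + Rx)$ (centering at $x_0$ rather than the origin) and applies Theorem \ref{OofV} at radius $r = 1/R$. Note that the elliptic-estimate step you flag as an obstacle is unnecessary: in your own setup $|v(0)| = |u(0)| \ge 1$ already gives \eqref{normal} directly (and in the paper's setup $|u_R(-x_0/R)| = |u(0)| \ge 1$ does the same), so no gradient bounds, translation, or propagation of the lower bound is required.
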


\begin{figure}[h]
\begin{center}
\includegraphics[height=2in,width=4in,]{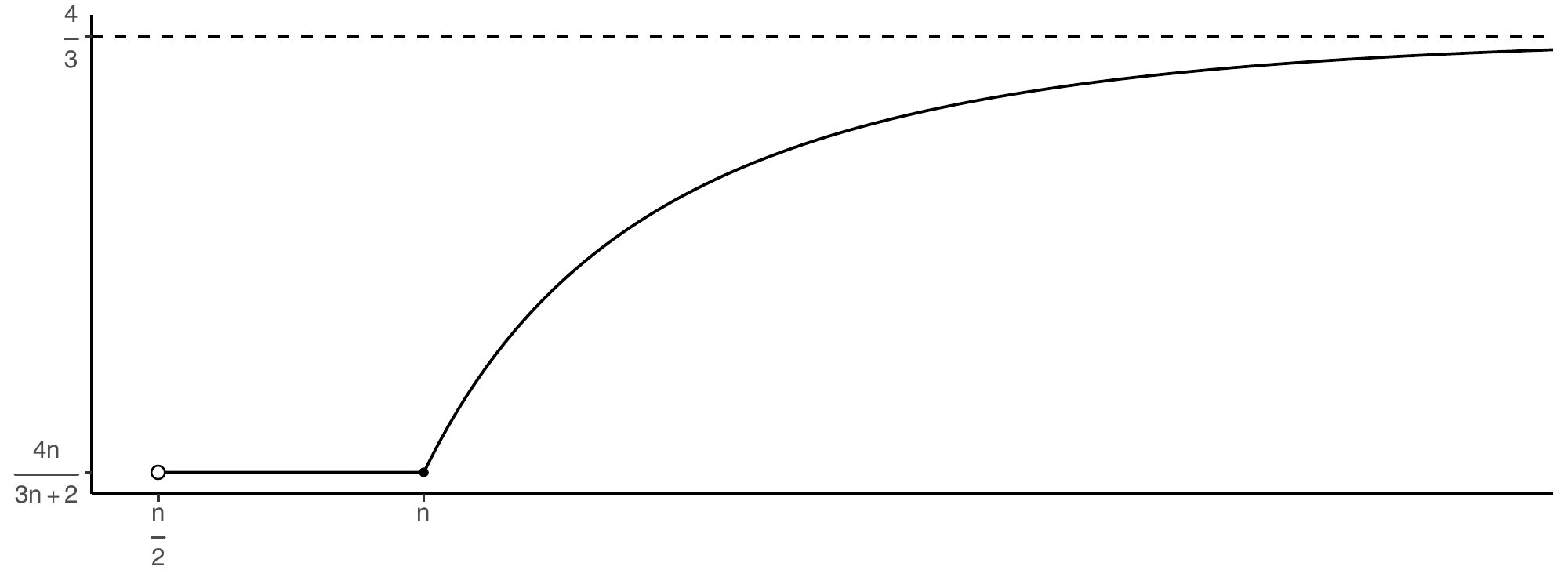}
\caption{The graph of $\Pi$ as a function of $t$.}
\end{center}
\end{figure}

\begin{remark}
For both of these theorems, we may define $\eps_0 = 2\pr{2 - \frac n t}$.
Moreover, if $t \in \pb{n, \iny}$, then all statements hold with $\eps = 0$.
\end{remark}

\begin{remark}
If we compare Theorems \ref{OofV} and \ref{UCV} with Theorems 3 and 4 from \cite{DZ18} (the analogous results for $n = 2$), we see that our new results are consistent.
Specifically, if we evaluate $\mu$ and $\Pi$ at $n = 2$, we recover the formulas that were derived in \cite{DZ18}.
\end{remark}

The proof of Theorem \ref{UCV} is brief, so we present it now.

\begin{proof}
Let $u$ be a solution to \eqref{goal} in $\R^n$.
Fix $x_0 \in \R^n$ and set $\abs{x_0} = R$.
Let $u_R(x) = u(x_0 + Rx)$ and $V_R\pr{x} = R^2 V\pr{x_0 + R x}$.
Then, $\disp \norm{V_R}_{L^t\pr{B_{10}}} \le A_0 R^{2 - \frac n t}$ and
\begin{align*}
 \LP u_R\pr{x} + V_R\pr{x} u_R\pr{x}
&= R^2 \LP u\pr{x_0 + R x} + R^2 V\pr{x_0 + R x}u\pr{x_0 + R x}
= 0.
\end{align*}
Therefore, $u_R$ is a solution to a scaled version of \eqref{goal} in $B_{10}$.
Clearly,
\begin{align*}
\norm{u_R}_{L^\iny\pr{B_{10}}}
&= \norm{u}_{L^\iny\pr{B_{10R}\pr{x_0}}} \le C_0.
\end{align*}
Note that for $\disp\widetilde{x_0} := -x_0/R$, $\disp| \widetilde{x_0}| = 1$ and $\abs{u_R\pr{\widetilde{x_0}}} = \abs{u(0)} \ge 1$ so that $\disp\norm{u_R}_{L^\iny(B_1)} \ge 1$.
Thus, if $R$ is sufficiently large, then we may apply Theorem \ref{OofV} to $u_R$ with $M = A_0 R^{2 - \frac n t}$, $\hat C = C_0$, and $r = 1/ R$ to get
\begin{align*}
\norm{u}_{L^\iny\pr{{B_{1}(x_0)}}}
&= \norm{u_R}_{L^\iny\pr{B_{1/R}}}
\ge c(1/R)^{^{C \pr{A_0 R^{2 - \frac n t} }^\mu}} 
= \exp\set{-\brac{C \pr{A_0 R^{2 - \frac n t} }^\mu} \log R + \log c}.
\end{align*}
Since $\Pi = \mu\pr{2 - \frac n t}$, then
\begin{align*}
\norm{u}_{L^\iny\pr{{B_{1}(x_0)}}}
\ge & \exp\brac{-\pr{C A_0^\mu - \log c} R^\Pi \log R},
\end{align*}
as required.
\end{proof}

In our final theorem, we establish is an order of vanishing estimate for equations with a drift term.
The only difference between this theorem and the result that we previously obtained in \cite[Theorem 1]{DZ17} is that the value of $\mu$ has improved for smaller values of $t$.

\begin{theorem}
Let $s \in \pb{\frac{3n-2}{2}, \iny}$ and $t \in \pb{ n\pr{\frac{3n-2}{5n-2}}, \iny}$.
Assume that for some $K, M \ge 1$, $\norm{W}_{L^s\pr{B_{10}}} \le K$ and $\norm{V}_{L^t\pr{B_{10}}} \le M$.
Let $u : B_{10} \to \C$ be a solution to $\LP u + W \cdot \gr u + Vu = 0$ in $B_{10}$.
Assume that $u$ is bounded and normalized in the sense of \eqref{bound} and \eqref{normal}.
Then the maximal order of vanishing for $u$ in $B_{1}$ is less than $ C_1 K^\kappa + C_2 M^\mu$.
That is, for any $x_0\in B_1$ and any $r$ sufficiently small,
\begin{align*}
\norm{u}_{L^\iny(B_{r}(x_0))} &\ge c r^{\pr{ C_1 K^\kappa + C_2 M^\mu}},
\end{align*}
where $C_1 = C_1\pr{n, s, t, \hat C}$, $C_2 = C_2\pr{n, s, t, \hat C}$,
$\disp \kappa = \left\{\begin{array}{ll}
\frac{4s}{2s - \pr{3n-2}} & t \in \pb{\frac{sn}{s+n}, \iny} \medskip \\ 
\frac{4t}{\pr{5 - \frac 2 n}t - \pr{3n-2}} & t \in \pb{n\pr{\frac{3n-2}{5n-2}},\frac{sn}{s+n}} 
\end{array}\right.$,
$\disp \mu = \left\{\begin{array}{ll}
\frac{4s}{6s - \pr{3n-2}} & t \in \brac{s, \iny} \medskip \\
\frac{4 s t}{6 s t + \pr{n+2}t -4ns + n\pr{n-2}\pr{1 - \frac t s}} & t \in \pr{\frac{sn}{s+n}, s} \medskip \\
\frac{2t}{2t -n} & t \in \pb{n\pr{\frac{3n-2}{5n-2}}, \frac{sn}{s+n}}
\end{array}\right.$,
$c = c\pr{n, s, t, \hat C}$.
\label{OofVW}
\end{theorem}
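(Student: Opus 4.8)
The plan is to derive the stated lower bound from a quantitative three-ball inequality, which in turn is extracted from a Carleman estimate. The decisive tool is the new $L^{p}$--$L^{q}$ Carleman estimate obtained by interpolating the $L^{p}$--$L^{2}$ estimate of \cite{DZ17} against a new endpoint $L^{p}$--$L^{\frac{2n}{n-2}}$ estimate; the interpolation parameter will be chosen as a function of $s$ and $t$, and the several regimes appearing in the formulas for $\kappa$ and $\mu$ will correspond to which of the competing constraints — on $q$, on $p$, and on the powers of the large parameter — is binding. Setting $W \equiv 0$ throughout recovers the proof of Theorem \ref{OofV}, which is a useful consistency check on the bookkeeping.

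Concretely, I would fix a radial Carleman weight $e^{\tau\psi}$ with $\psi$ a convex function of $\log\abs{x}$, chosen so that the Carleman inequality reads, schematically,
\[
\tau^{a}\,\norm{\abs{x}^{-\ga}e^{\tau\psi}\,w}_{L^{q}} + \tau^{b}\,\norm{\abs{x}^{-\ga'}e^{\tau\psi}\,\gr w}_{L^{\tilde q}} \le C\,\norm{\abs{x}^{-\ga''}e^{\tau\psi}\,\LP w}_{L^{p}}
\]
for all $w \in C^{\infty}_{c}\pr{B_{1}\setminus\set{0}}$ and all $\tau$ outside a discrete exceptional set, with $C$ independent of $\tau$. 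The gradient term on the left is exactly what allows one to accommodate a drift $W\cdot\gr u$, and the admissible ranges $s \in \pb{\frac{3n-2}{2},\iny}$, $t \in \pb{n\pr{\frac{3n-2}{5n-2}},\iny}$ should be precisely those for which the H\"older exponents below line up and the powers $a,b$ remain positive — which is why they coincide with the ranges in \cite{DZ17}.

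Next, apply this to $w = \eta u$, where $\eta$ is a cutoff equal to $1$ on an annulus $\set{2r < \abs{x} < 1/2}$ and supported in $\set{r < \abs{x} < 1}$. From the equation, $\LP\pr{\eta u} = -\eta\,W\cdot\gr u - \eta V u + 2\gr\eta\cdot\gr u + u\,\LP\eta$. The commutator terms $2\gr\eta\cdot\gr u$ and $u\,\LP\eta$ are supported where $\abs{x}\sim r$ or $\abs{x}\sim 1$: on $\abs{x}\sim 1$ they are controlled by \eqref{bound} together with interior elliptic estimates for $\gr u$, while on $\abs{x}\sim r$ they are absorbed by taking $r$ small and arranging $\psi$ to penalize that shell heavily. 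The dangerous terms are $\norm{\abs{x}^{-\ga''}e^{\tau\psi}\eta V u}_{L^{p}}$ and $\norm{\abs{x}^{-\ga''}e^{\tau\psi}\eta W\cdot\gr u}_{L^{p}}$; by H\"older with $\frac1p=\frac1t+\frac1q$, respectively $\frac1p=\frac1s+\frac1{\tilde q}$, these are bounded by $M\,\norm{\abs{x}^{-\ga}e^{\tau\psi}\eta u}_{L^{q}}$ and $K\,\norm{\abs{x}^{-\ga'}e^{\tau\psi}\eta\gr u}_{L^{\tilde q}}$ (up to terms where $\gr$ falls on $\eta$, handled as above). To absorb these into the left-hand side one needs $\tau^{a}\gtrsim M$ and $\tau^{b}\gtrsim K$, i.e. any $\tau$ above a threshold $\tau_{0} \sim C_{1}K^{\kappa}+C_{2}M^{\mu}$ with $\mu=1/a$ and $\kappa=1/b$; optimizing the interpolation parameter subject to the two H\"older relations and to $q\le\frac{2n}{n-2}$, $p\ge\frac{2n}{n+2}$ is what yields the three-case formulas in the statement.

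With the lower-order terms absorbed, the surviving Carleman inequality — valid for every $\tau \ge \tau_{0}$ — gives, after converting the weighted $L^{q}$ bound on an intermediate shell to an $L^{\iny}$ bound via Sobolev embedding and the de Giorgi--Nash--Moser estimate already invoked for \eqref{goal}, a quantitative three-ball inequality whose constants depend on $n,s,t,\hat C$ and on $K,M$ only through $\tau_{0}$. Iterating this over dyadic annuli from the unit scale down and using \eqref{bound} and \eqref{normal} produces $\norm{u}_{L^{\iny}\pr{B_{r}\pr{x_{0}}}} \ge c\, r^{C_{1}K^{\kappa}+C_{2}M^{\mu}}$ for $r$ small; the passage from the center $0$ to an arbitrary $x_{0}\in B_{1}$ is handled by the usual propagation-of-smallness chain. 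I expect the real work to lie in two places: (i) establishing the new endpoint $L^{p}$--$L^{\frac{2n}{n-2}}$ Carleman estimate with a gradient term and a clean power of $\tau$, since standard methods tend to degenerate at the Sobolev endpoint; and (ii) the exponent bookkeeping — making the interpolation parameter, the H\"older relations, and the ranges of $s$ and $t$ simultaneously compatible — which is exactly what splits the conclusion into cases and pins down the thresholds $\frac{3n-2}{2}$ and $n\pr{\frac{3n-2}{5n-2}}$.
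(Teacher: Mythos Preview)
Your outline matches the paper's strategy exactly: derive a Carleman estimate for $\LP + W\cdot\gr + V$ by absorbing the lower-order terms into the $L^p$--$L^q$ Carleman estimate for $\LP$, extract a three-ball inequality, and then run the propagation-of-smallness argument from \cite{DZ17}. The case split in $\kappa$ and $\mu$ arises precisely as you say, from optimizing $(p,q)$ under the H\"older constraints and the admissible range $\frac{2n}{n+2}\le p\le 2\le q\le \frac{2n}{n-2}$.

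One correction that will save you effort: the new endpoint $L^p$--$L^{\frac{2n}{n-2}}$ Carleman estimate does \emph{not} carry a gradient term, and you should not try to prove one there --- there is a known obstruction to weighted gradient estimates at the Sobolev endpoint (cf.\ \cite{Jer86} and the discussion in \cite{DZ17}). In the paper, the gradient term on the left comes solely from the $q=2$ estimate of \cite{DZ17}, so in your notation one is forced to take $\tilde q=2$; the H\"older relation for the drift is therefore $\frac1p=\frac1s+\frac12$, i.e.\ $p=\frac{2s}{s+2}$ (or $p=\frac{2nt}{2n-2t+nt}$ in the low-$t$ regime, where the $V$ constraint binds instead). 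This is what fixes $\be_1=\frac12-\frac{3n-2}{4}\bigl(\frac1p-\frac12\bigr)$ and hence $\kappa=1/\be_1$, and explains why the admissible range for $s$ does not enlarge relative to \cite{DZ17} even though the $\mu$ exponent improves.
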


As in the proof of Theorem \ref{UCV}, a scaling argument may be used to pass from this order of vanishing estimate to a unique continuation at infinity theorem.
Since such an argument shows that $\Pi = \max\set{\kappa\pr{1 - \frac n s}, \mu \pr{2 - \frac n t}}$, the result of this argument leads to exactly \cite[Theorem 2]{DZ17}.
In other words, the order of vanishing estimate here is an improvement over \cite[Theorem 1]{DZ17},  but the improvement is not substantial enough to imply a stronger unique continuation at infinity theorem.
For the precise statement of this unique continuation at infinity theorem, we refer the reader to \cite{DZ17} .

The remainder of this article is devoted to the proofs of Theorems \ref{OofV} and \ref{OofVW}.
In the following section, Section \ref{CE}, we state the new $L^p - L^q$ Carleman estimate for $\LP$ that allows us to consider all values of $t > \frac n 2$.
We then use this Carleman estimate to establish related Carleman estimates for the operators $\LP + V$ and $\LP + W \cdot \gr + V$.
In Section \ref{OofVProof}, we describe the proofs of Theorems \ref{OofV} and \ref{OofVW}.
First we use our Carleman estimate for the operator to establish a three-ball inequality, then we use the three-ball inequality to prove a propagation of smallness result that leads to the stated estimate.
Finally, Section \ref{CarlProofs} presents the proof of our most important tool, the new $L^p - L^q$ Carleman estimate that is stated below in Theorem \ref{Carlpq}.

\section{Carleman estimates}
\label{CE}

Within this section, we state our new $L^p - L^q$ Carleman estimate for $\LP$.
We also present the version of this theorem that was used in \cite{DZ17}, and discuss the relationship between the two estimates.
Then we use our new Carleman estimate for $\LP$ to establish a Carleman estimate for $\LP + V$, where $V \in L^t$ for any $t > \frac n 2$.
The Carleman estimate for $\LP + V$ in \cite{DZ17} was only applicable to $t > \frac{4 n^2}{7n+2}$, so there are significant improvements here that we point out.
Finally, we present a new estimate for operators of the form $\LP + W \cdot \gr + V$.
This Carleman estimate is very similar to the one achieved in \cite{DZ17}, but the differences will be indicated.

Our new Carleman estimate for the Laplacian is as follows.

\begin{theorem}
Let $\frac{2n}{n+2} < p \le 2 \le q \le \frac{2n}{n-2}$. 
There exists a constant $C = C\pr{n,p,q}$ and a sufficiently small $R_0 < 1$, such that for any $u\in C^{\iny}_{0}\pr{B_{R_0}(x_0)\backslash\set{x_0} }$ and any $\tau > 1$, one has
\begin{align}
&\tau^{\be} \norm{\pr{\log r}^{-1} e^{-\tau \phi\pr{r}}u}_{L^q(r^{-n}dx)}
+ \tau^{\be_1} \norm{ \pr{\log r}^{-1} e^{-\tau \vp\pr{r}} r \gr u}_{L^2(r^{-n}dx)} \nonumber \\
&\le C \norm{ \pr{\log r} e^{-\tau \phi\pr{r}} r^2 \LP u}_{L^p(r^{-n} dx)} ,
\label{mainCar}
\end{align}
where $\be =  \frac 3 2  - \frac{3n-2}{4}\pr{\frac 1 p - \frac 1 2} - \brac{1-  \frac{n-2}{4}\pr{\frac 1 p - \frac 1 2}}n \pr{\frac 1 2 - \frac{1}{q}}$, $\be_1 =\frac{1}{2}- \frac{3n-2}{4}\pr{\frac 1 p - \frac 1 2}$, and $\phi(r)=\log r + \log \pr{\log r}^2$.
 \label{Carlpq}
\end{theorem}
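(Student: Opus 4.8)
The plan is to obtain \eqref{mainCar} by interpolating, in the exponent $q$, between two endpoint Carleman estimates. The endpoint $q = 2$ is precisely the $L^p - L^2$ estimate already available from \cite{DZ17}: it supplies both the function term, with exponent $\be|_{q=2} = \frac 3 2 - \frac{3n-2}{4}\pr{\frac 1 p - \frac 1 2}$, and the gradient term, with exponent $\be_1$, which does not depend on $q$. The other endpoint is a new one at $q = \frac{2n}{n-2}$, where $n\pr{\frac 1 2 - \frac 1 q} = 1$ and the target exponent reduces to $\be|_{q = \frac{2n}{n-2}} = \frac 1 2 - \frac n 2\pr{\frac 1 p - \frac 1 2}$. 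Since $\be$ is an affine function of $\frac 1 q$, it interpolates linearly between these two values, which is exactly the shape one extracts from Riesz--Thorin (or Stein) interpolation: holding $\tau$ and $p$ fixed, one views $g \mapsto \pr{\log r}^{-1} e^{-\tau \phi}\, G_\tau\pr{\pr{\log r} e^{-\tau\phi}\, g}$ — where $G_\tau$ denotes the map $r^2 \LP u \mapsto u$, well defined since $\LP$ is injective on the test functions at hand — as a single linear operator, and interpolates its two endpoint $L^p \to L^q$ bounds, all taken with respect to the common measure $r^{-n}\,dx$; the product of the endpoint operator norms $\tau^{-\be|_{q=2}}$ and $\tau^{-\be|_{q = \frac{2n}{n-2}}}$ then reproduces $\tau^{-\be}$ for intermediate $q$, and the gradient term is simply retained from the $q = 2$ estimate. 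So everything reduces to the new endpoint estimate at $q = \frac{2n}{n-2}$.

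For that endpoint I would pass to the cylinder. Setting $r = e^{-t}$ identifies $B_{R_0}\pr{x_0}\setminus\set{x_0}$ with a half-cylinder $\set{t > T_0}\times S^{n-1}$, $T_0 = \log\frac 1 {R_0}$, and: the measure turns $L^q\pr{r^{-n}\,dx}$ into an ordinary $L^q\pr{dt\,d\om}$; $r^2 \LP$ becomes $\del_t^2 - \pr{n-2}\del_t + \LP_\om$, and a conjugation by $e^{\frac{n-2}{2}t}$ removes the first-order $t$-term; since $\phi(r) = -t + 2\log t$, the Carleman weight $e^{-\tau\phi}$ equals $e^{\tau t}\,t^{-2\tau}$, so conjugating by $e^{\tau t}$ produces the model operator $\pr{\del_t - \tau}^2 - \pr{\tfrac{n-2}{2}}^2 + \LP_\om$, while the residual factor $t^{-2\tau}$ only perturbs the drift and contributes lower-order terms. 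The correction $2\log t$ in $\phi$ is the Jerison--Kenig device that makes the resulting estimate valid for all $\tau > 1$ (rather than only for $\tau$ avoiding a discrete resonant set), and controlling the perturbative terms it generates is what forces $R_0$ to be small. Taking the Fourier transform in $t$ and summing over frequencies reduces the model estimate to a uniform-in-$\xi$ bound
\[
\norm{g}_{L^{\frac{2n}{n-2}}\pr{S^{n-1}}} \le C\, m\pr{\xi,\tau}\, \norm{\brac{-\LP_\om + \pr{\xi + i\tau}^2 + \pr{\tfrac{n-2}{2}}^2} g}_{L^p\pr{S^{n-1}}}, \qquad \xi \in \R,
\]
with $\norm{m\pr{\cdot,\tau}}_{L^\iny_\xi} \lesssim \tau^{-\pr{\frac 1 2 - \frac n 2\pr{\frac 1 p - \frac 1 2}}}$. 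This is a uniform Sobolev / resolvent estimate for the Laplacian on $S^{n-1}$ at the complex spectral parameter $-\pr{\xi+i\tau}^2 - \pr{\tfrac{n-2}{2}}^2$, whose distance to the eigenvalues $k\pr{k+n-2}$, $k \ge 0$, of $-\LP_\om$ is $\gtrsim \tau$; it is obtained by dyadically splitting the resolvent into the spectral projectors of $-\LP_\om$ and applying to each block the $L^p$ spectral-cluster (eigenfunction) estimates of Sogge on $S^{n-1}$ together with the Kenig--Ruiz--Sogge uniform Sobolev inequality, the $\tau$-gain coming from the spectral gap. Undoing the Fourier transform and the change of variables yields the endpoint estimate, after which the interpolation above gives \eqref{mainCar} in general.

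The main obstacle is this endpoint estimate with its \emph{sharp} power of $\tau$. Quoting the $q = 2$ case and running the interpolation are bookkeeping; moreover, when $p = 2$ the exponents $\be|_{q = \frac{2n}{n-2}}$ and $\be_1$ coincide and the endpoint is essentially a Sobolev embedding on the $n$-dimensional cylinder applied to the gradient estimate, so the genuine difficulty is the extra gain $\tau^{\frac{n-2}{4}\pr{\frac 1 p - \frac 1 2}}$ needed when $p < 2$. Capturing it requires (a) a careful reduction to $S^{n-1}$, including control of the error terms produced by the correction $2\log t$ in $\phi$ and by the first-order conjugation terms — this is what pins down how small $R_0$ must be — and, more seriously, (b) extracting exactly the exponent $\frac 1 2 - \frac n 2\pr{\frac 1 p - \frac 1 2}$ from a uniform-in-$\xi$ $L^p \to L^{\frac{2n}{n-2}}$ resolvent bound on the sphere, which is where the new harmonic-analytic input enters and where the interplay between the distance to the spectrum and the $L^p \to L^{\frac{2n}{n-2}}$ operator norms of the individual frequency blocks must be balanced with precision.
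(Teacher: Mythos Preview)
Your interpolation scheme matches the paper exactly: Theorem~\ref{Carlpq} is obtained by interpolating the $q=2$ estimate from \cite{DZ17} (Theorem~\ref{CarlpqO}) with a new endpoint at $q=\frac{2n}{n-2}$, and the endpoint exponent you compute, $\frac 1 2 - \frac n 2\pr{\frac 1 p - \frac 1 2}$, is the right one (it is $\be_2$ from Proposition~\ref{CarL+L-pq} at that $q$). The gradient term is simply carried over from the $q=2$ case, as you say.

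Where you diverge from the paper is the endpoint itself. The paper does \emph{not} take the Fourier transform in $t$. After factoring $e^{2t}\LP = L^+L^-$ and conjugating by the full weight $e^{-\tau\vp(t)}$ with $\vp(t)=t+\log t^2$, it projects onto each $E_k$, solves the resulting second-order ODE explicitly (two integral representations, one for $k$ above $\tau\vp'(t)$ and one below), bounds the kernels pointwise using $\vp''<0$, and then gets the $L^p\to L^q$ gain from Sogge's eigenfunction bounds on the angular sum (Lemma~\ref{upDown}) together with two iterations of Young's convolution inequality in $t$. The novelty over \cite{DZ17} is that Young is iterated rather than the first-order Carleman estimates.

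Your Fourier-in-$t$ route has a real gap in the role you assign to the $\log t^2$ correction. You say the residual factor $t^{-2\tau}$ ``only perturbs the drift and contributes lower-order terms'' to be absorbed by taking $R_0$ small, but that drift is of size $\tau/|t|$, which is not uniformly small in $\tau$ on any fixed support $|t|>T_0$. More seriously, the model operator $(\del_t-\tau)^2-(\tfrac{n-2}{2})^2+\LP_\om$ has a genuine resonance at $\xi=0$ whenever $\tau=k+\tfrac{n-2}{2}$: the symbol $(k+\tfrac{n-2}{2})^2+(\xi+i\tau)^2$ vanishes there, and your uniform-in-$\xi$ resolvent bound on $S^{n-1}$ simply fails at those $\tau$. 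The correction $\log t^2$ does not fix this perturbatively; it fixes it through the strict convexity $\vp''(t)=-2/t^2<0$, which in the paper's ODE approach produces the Gaussian factor $e^{-\tau(y-t)^2/t^2}$ in the kernel (see \eqref{highSum}--\eqref{lowSum}) and is exactly what rescues the low-frequency sum near $k\approx\tau\vp'(t)$. If you want to keep a Fourier/resolvent picture you need a separate mechanism for the resonant $\tau$ (e.g.\ prove the estimate first for $\tau$ at fixed distance from the half-integers with weight $r^{-\tau}$ and then transfer to the modified weight, as in \cite{Jer86}); treating the correction as lower order is not enough.
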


This crucial estimate is proved below in Section \ref{CarlProofs}, but we'll describe the approach here.
We first modify the ideas from \cite{DZ17} to prove a new $L^p - L^q$ Carleman estimate for the Laplacian.
The big idea here is to iterate Young's convolution inequality instead of iterating Carleman estimates for the first order factors of the Laplacian.
Then with interpolate our new estimate at $q = \frac{2n}{n-2}$ with the estimate from \cite[Theorem 5]{DZ17} at $q = 2$.
The $L^p - L^q$ Carleman estimate from \cite{DZ17} is given by:

\begin{theorem}[Theorem 5 in \cite{DZ17}]
Let $\frac{2n}{n+2} < p \le 2 \le q \le \frac{2n}{n-2}$. 
There exists a constant $C = C\pr{n,p,q}$ and a sufficiently small $R_0 < 1$ such that for any $u\in C^{\iny}_{0}\pr{B_{R_0}(x_0)\backslash\set{x_0} }$ and any $\tau>1$, one has
\begin{align}
& \tau^{\be_0} \norm{ \pr{\log r}^{-1} e^{-\tau \vp\pr{r}}u}_{L^q(r^{-n}dx)}
+ \tau^{\be_1} \norm{ \pr{\log r}^{-1} e^{-\tau \vp\pr{r}} r \gr u}_{L^2(r^{-n}dx)} \nonumber \\
&\le C \norm{(\log r ) e^{-\tau \vp\pr{r}} r^2 \LP u}_{L^p(r^{-n} dx)} ,
\label{mainCarO}
\end{align}
where $\be_0 = \frac 3 2 - \frac{3n-2} 4 \pr{\frac 1 p - \frac 1 2 } - n\pr{\frac 1 2 - \frac 1 q}$, and $\be_1$ and $\phi\pr{r}$ is as in the previous theorem.
 \label{CarlpqO}
\end{theorem}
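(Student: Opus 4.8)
This is the $L^p$--$L^q$ Carleman estimate established in \cite{DZ17}; here is the route one would take to establish it. The weighted measure $r^{-n}\,dx$ and the factors $\log r$ are the signature of the Emden--Fowler (conformal) change of variables. After that change one decomposes into spherical harmonics and, on each sector, composes one-dimensional Carleman estimates for the two first-order factors of the transformed Laplacian --- this is the ``iterate Carleman estimates for the first order factors'' approach referred to in the introduction.

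\emph{Reduction to the cylinder.} Write $x = x_0 + e^{-s}\omega$ with $s \in \R$, $\omega \in S^{n-1}$, and $u(x) = e^{\frac{n-2}{2}s}\, v(s,\omega)$. Then $r^{-n}\,dx = ds\,d\omega$, $\log r = -s$, and $r^2 \LP u = e^{\frac{n-2}{2}s}\,\Cal{L}v$, where $\Cal{L} = \del_s^2 + \LP_\omega - \pr{\frac{n-2}{2}}^2$ and $\LP_\omega$ is the Laplace--Beltrami operator on $S^{n-1}$. Since $u \in C^{\iny}_{0}\pr{B_{R_0}(x_0)\backslash\set{x_0}}$, the function $v$ is smooth and compactly supported on the cylinder $\Cal{C} = \R \times S^{n-1}$, so no boundary terms arise, and because $R_0 < 1$ is small the variable $s$ ranges over a half-line $s > \log\pr{1/R_0} \gg 1$. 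Setting $\psi(s) = \phi\pr{e^{-s}} = -s + 2\log s$ and $w = e^{-\tau\psi}u$, estimate \eqref{mainCarO} becomes a Carleman estimate on $\Cal{C}$ for $L_\tau := e^{-\tau\psi}\pr{r^2\LP}e^{\tau\psi}$: a bound of $\tau^{\be_0}\norm{s^{-1}w}_{L^q(\Cal{C})} + \tau^{\be_1}\norm{s^{-1}\gr_{s,\omega}w}_{L^2(\Cal{C})}$ by $C\norm{s\,L_\tau w}_{L^p(\Cal{C})}$.

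\emph{Spherical harmonics and the one-dimensional core.} Decompose $w = \sum_k w_k$ with $-\LP_\omega w_k = k(k+n-2)w_k$, and set $\mu_k = k + \frac{n-2}{2}$, so that $k(k+n-2) + \pr{\frac{n-2}{2}}^2 = \mu_k^2$. On the $k$-th sector $L_\tau$ becomes the one-dimensional operator $\pr{\del_s - \Phi'}^2 - \mu_k^2 = \pr{\del_s - \Phi' - \mu_k}\pr{\del_s - \Phi' + \mu_k}$, where $\Phi'(s) = \frac{n-2}{2} - \tau\psi'(s) = \frac{n-2}{2} + \tau - \frac{2\tau}{s}$. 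The heart of the argument is a pair of bounds for the first-order factors $D_\pm = \del_s - \Phi' \pm \mu_k$ that are uniform in $k$ and $\tau$: an $L^2(ds) \to L^2(ds)$ inversion bound for one factor (with a companion bound on $\del_s D_\pm^{-1}$ controlling the gradient term), and an $L^p(ds) \to L^q(ds)$ bound with a two-derivative gain for the composition $D_+^{-1}D_-^{-1}$. Each $D_\pm^{-1}$ is an explicit one-sided integral operator with kernel built from $\exp\pr{\int_{s'}^s \pr{\Phi' \mp \mu_k}}$; the $2\log s$ term in $\psi$ --- i.e.\ the $\log\pr{\log r}^2$ in $\phi$ --- is precisely what keeps the zeroth-order coefficient $\Phi' \mp \mu_k = \tau \mp \mu_k + \frac{n-2}{2} - \frac{2\tau}{s}$ from vanishing on the (large-$s$) support for every $\tau > 1$, yielding integrable kernels with the sharp powers of $\tau$ and eliminating the ``forbidden $\tau$'' a bare $\log r$ weight would create. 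The exponents in \eqref{mainCarO} are then read off from the $\tau$-scaling of these kernels: $\be_1 = \frac12 - \frac{3n-2}{4}\pr{\frac1p - \frac12}$ from the single-factor $L^2$ bound, and the extra $1 - n\pr{\frac12 - \frac1q}$ in $\be_0$ from composing the second factor and upgrading the $L^2$ target to $L^q$ by a Sobolev/Hardy--Littlewood--Sobolev step.

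\emph{Reassembly, and the main obstacle.} Writing $w_k = D_+^{-1}D_-^{-1}\big(\pr{\del_s - \Phi'}^2 - \mu_k^2\big)w_k$, applying the sectorial bounds, and summing over $k$ using the orthogonality of the spherical harmonics together with Minkowski's inequality (the constraint $p \le 2 \le q$ is what makes the $\ell^2_k$-summation admissible), one obtains the claim on $\Cal{C}$; the $s^{\pm 1}$ weights are absorbed by one-dimensional Hardy inequalities, and undoing the change of variables --- after absorbing the lower-order remainder that appears because $r^2\LP$ over a ball transforms to $\Cal{L}$ plus terms carrying extra decay in $r$, controlled for $R_0$ small --- gives \eqref{mainCarO}. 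The main obstacle is this pair of one-dimensional estimates: producing the $L^p \to L^q$ and $L^2 \to L^2$ bounds for $D_\pm$ with constants and powers of $\tau$ genuinely uniform over \emph{all} spherical eigenvalues $\mu_k$ at once. Since the kernel of $D_\pm^{-1}$ degenerates as $\Phi'(s) \to \pm\mu_k$, this both forces the precise Carleman weight (the $\log\pr{\log r}^2$ correction) and demands a careful extraction of the sharp $\tau$-power after composing the factors and the Sobolev step --- bookkeeping that is exactly what produces the stated $\be_0$ and $\be_1$.
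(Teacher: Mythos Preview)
This theorem is not proved in the present paper; it is quoted verbatim from \cite{DZ17} and used as an input. The paper does, however, describe the method of \cite{DZ17} in the discussion surrounding Proposition~\ref{CarL+L-pq}: write $e^{2t}\LP = L^+L^-$, prove Carleman estimates for each first-order factor separately, iterate those estimates to obtain an $L^p\text{--}L^2$ inequality with a gradient term, and then apply a Sobolev inequality to upgrade the left-hand side to $L^q$. Your sketch follows exactly this outline --- the Emden--Fowler change of variables, the spherical-harmonic decomposition, the explicit integral-kernel inversion of the first-order factors, the composition, and the final Sobolev step --- and your identification of the role of the $\log(\log r)^2$ correction in the weight (to keep the zeroth-order coefficients of $D_\pm$ from vanishing and so avoid resonant $\tau$) is correct. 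So the approach is right and matches what the paper attributes to \cite{DZ17}.

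One bookkeeping inconsistency worth fixing: in your first paragraph you perform the conformal shift $u = e^{\frac{n-2}{2}s}v$, which absorbs the first-order term and yields $\Cal L = \del_s^2 + \LP_\omega - \bigl(\tfrac{n-2}{2}\bigr)^2$; but in the second paragraph you set $w = e^{-\tau\psi}u$ (not $e^{-\tau\psi}v$) and write $\Phi' = \tfrac{n-2}{2} - \tau\psi'$, which is the formula one obtains by conjugating the \emph{unshifted} operator $r^2\LP = \del_s^2 - (n-2)\del_s + \LP_\omega$. Either convention works --- and your factorization $(\del_s - \Phi')^2 - \mu_k^2 = D_+D_-$ is indeed correct for the unshifted one --- but you should commit to one and drop the spurious $v,\Cal L$. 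This is cosmetic and does not affect the argument.
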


If $\be \ge \be_0$, then Theorem \ref{Carlpq} is a better estimate than Theorem \ref{CarlpqO}.
Since $\be \ge \be_0$ if and only if $\frac{n\pr{n-2}} 4 \pr{\frac 1 p - \frac 1 2} \pr{\frac 1 2 - \frac 1 q} \ge 0$, which always holds, then Theorem \ref{Carlpq} improves upon Theorem \ref{CarlpqO} in all cases.
Moreover, since $\be = \be_0$ when either $p = 2$ or $q = 2$, then applications of the Carleman estimates leads to the same results when we are in this setting.
An examination of the proofs in \cite{DZ17} shows that  for any $t > n$, the optimal value is obtained by choosing $q = 2$.
Therefore, there will be some cases where our results follow exactly from the proofs in \cite{DZ17}.

To establish a Carleman estimate for the operator $\LP + V$, where $V \in L^t$, the optimal relationship between $p$ and $q$ is $\frac 1 p - \frac 1 q = \frac 1 t$.
And to quantify the strong unique continuation property for $\LP + V$, we require that our estimates hold for all sufficiently large values of $\tau$.
For any $t \le \frac{4 n^2}{7n+2}$, all admissible values of $p$ and $q$ lead that $\be_0 \le 0$, explaining why Theorem \ref{CarlpqO} cannot be used for $t \in \pb{\frac n 2, \frac{4 n^2}{7n+2}}$.
However, for every $t > \frac n 2$, we may choose $p$ and $q$ within the acceptable range so that $\be > 0$.
This observation has two implications.
First, it means that there exists a range of $t$-values where Theorem \ref{Carlpq} leads to a better quantification than Theorem \ref{CarlpqO}.
Second, it implies that we may now consider all values of $t > \frac n 2$.
Our Carleman estimate for $\LP + V$ is as follows.

\begin{theorem}[cf. Theorem 8 in \cite{DZ17}]
Assume that for some $t \in \pb{\frac n 2, \iny}$ and $M \ge 1$, $\norm{V}_{L^t\pr{B_{R_0}}} \le M$.
Then for any $\eps \in \pr{0, \eps_0}$, there exist constants $C_0 = C_0\pr{n,t, \eps}$, $C_1 = C_1\pr{n,t, \eps}$, and sufficiently small $R_0 < 1$  such that for any $u\in C^{\iny}_{0}\pr{B_{R_0}(x_0)\setminus \set{x_0}}$ and any $\tau \ge C_1 M^{\mu}$, one has
\begin{align}
\tau^{\be} \norm{\pr{\log r}^{-1} e^{-\tau \phi\pr{r}}u}_{L^2(r^{-n}dx)}
&\le  C_0 \norm{ \pr{\log r} e^{-\tau \phi\pr{r}} r^2\pr{ \LP u  + V u}}_{L^p(r^{-n} dx)} , \label{main1}
\end{align}
where $\be = \frac {3n+2} {4n} + \frac{3n-2} {8n}\eps$, 
$\disp \mu = \left\{\begin{array}{ll}
\frac{4t}{6t - \pr{3n-2}} & t \in \pb{n, \iny} \\
\frac{4nt}{\pr{2t - n}\pr{3n+2}} + \tilde \eps & t \in \pb{\frac{n}{2}, n}
\end{array}\right.$,
$\disp p = \left\{\begin{array}{ll}
\frac{2t}{t+2} & t \in \pb{n, \iny}  \\
 \frac{2n}{n + 2 - \eps} & t \in \pb{\frac{n}{2}, n}
\end{array}\right.$,
and $\tilde \eps = C\pr{n, t, \eps}\eps$.
\label{CarlpqV}
\end{theorem}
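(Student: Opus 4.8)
The plan is to obtain Theorem \ref{CarlpqV} from the new $L^p$--$L^q$ Carleman estimate of Theorem \ref{Carlpq} by a perturbation argument that treats $Vu$ as an error term; the structure mirrors the proof of \cite[Theorem 8]{DZ17}, but invoking Theorem \ref{Carlpq} in place of Theorem \ref{CarlpqO} is precisely what opens up the range $t \in \pb{\frac n2, \frac{4n^2}{7n+2}}$. For fixed $t$, choose $p$ as in the statement together with a companion exponent $q$ satisfying $\frac 1p - \frac 1q = \frac 1t$ --- the relation that lets H\"older's inequality dominate the potential term. Writing $\LP u = \pr{\LP u + Vu} - Vu$ and applying the triangle inequality on the right of \eqref{mainCar} reduces matters to absorbing $\norm{\pr{\log r} e^{-\tau \phi} r^2 Vu}_{L^p\pr{r^{-n}dx}}$ into the left-hand side; the gradient term in \eqref{mainCar} is non-negative and is discarded.

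To carry out the absorption, factor $\pr{\log r} e^{-\tau\phi} r^2 Vu = \brac{r^2 \pr{\log r}^2 V}\brac{\pr{\log r}^{-1} e^{-\tau\phi} u}$ and apply H\"older with exponents $t$ and $q$ against $r^{-n}dx$:
\[
\norm{\pr{\log r} e^{-\tau\phi} r^2 Vu}_{L^p\pr{r^{-n}dx}} \le \norm{r^2 \pr{\log r}^2 V}_{L^t\pr{r^{-n}dx}} \norm{\pr{\log r}^{-1} e^{-\tau\phi} u}_{L^q\pr{r^{-n}dx}}.
\]
Since $t > \frac n2$, the weight $r^{2t-n}\abs{\log r}^{2t}$ is bounded on $B_{R_0}$, so $\norm{r^2\pr{\log r}^2 V}_{L^t\pr{r^{-n}dx}} \le C\pr{n,t}\norm{V}_{L^t\pr{B_{R_0}}} \le C\pr{n,t}M$, and the error term is at most $CM\norm{\pr{\log r}^{-1}e^{-\tau\phi}u}_{L^q\pr{r^{-n}dx}}$. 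Using Theorem \ref{Carlpq} for the pair $\pr{p,q}$ to bound this by $CM\tau^{-\be'}\norm{\pr{\log r}e^{-\tau\phi}r^2\LP u}_{L^p\pr{r^{-n}dx}}$, where $\be'$ is the $\tau$-power produced by Theorem \ref{Carlpq} for $\pr{p,q}$, and absorbing, one finds that for $\tau \ge C_1 M^\mu$ with $\mu = 1/\be'$ one has the comparison $\norm{\pr{\log r}e^{-\tau\phi}r^2 \LP u}_{L^p} \le 2\norm{\pr{\log r}e^{-\tau\phi}r^2\pr{\LP u + Vu}}_{L^p}$. A direct computation from the exponent formula in Theorem \ref{Carlpq} gives $\be' = \frac{\pr{3n+2}\pr{2t-n}}{4nt}$ at $\eps = 0$, so that $\mu = 1/\be'$ agrees with the stated value; the $\tilde\eps$ term records the $\eps$-dependence of $\be'$.

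With the comparison $\norm{r^2\LP u}_{L^p} \le 2\norm{r^2\pr{\LP u + Vu}}_{L^p}$ in hand, one final application of Theorem \ref{Carlpq} --- keeping $p$ but using whatever second exponent is needed on the left --- finishes the argument. For $t \ge n$ one may take $q = 2$, so that $p = \frac{2t}{t+2}$, the H\"older-dual exponent is again $2$, and a single application of Theorem \ref{Carlpq} already delivers the $L^2$ estimate \eqref{main1} with $\be$, $p$, and $\mu = \frac{4t}{6t-\pr{3n-2}}$ as in the statement. The work is in the range $t \in \pb{\frac n2, n}$: there the admissibility window $\frac{2n}{n+2} < p \le 2 \le q \le \frac{2n}{n-2}$ is incompatible with $q = 2$ under $\frac 1p - \frac 1q = \frac 1t$, so the absorption above must be run with $q > 2$, and it is a \emph{second} invocation of Theorem \ref{Carlpq} at the pair $\pr{p, 2}$ --- whose $\tau$-power equals $\be = \frac{3n+2}{4n} + \frac{3n-2}{8n}\eps$ when $p = \frac{2n}{n+2-\eps}$ --- that produces the $L^2$ left-hand side of \eqref{main1}. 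The main obstacle is therefore this two-exponent bookkeeping for $t < n$: one must keep both applications of Theorem \ref{Carlpq} admissible (verifying in particular that $\eps < \eps_0 = 2\pr{2 - \frac nt}$ is exactly the condition forcing $q \le \frac{2n}{n-2}$), optimize over $\eps \in \pr{0,\eps_0}$, and check that the resulting $\be$ (which controls how large $\tau$ must be in the later three-ball arguments) and $\mu$ (the $\tau$-threshold itself, hence the order-of-vanishing exponent) come out sharp and reduce to the $t \ge n$ formulas at $t = n$.
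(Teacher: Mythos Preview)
Your proposal is correct and follows essentially the same approach as the paper: choose $p,q$ with $\frac 1 p - \frac 1 q = \frac 1 t$, apply Theorem~\ref{Carlpq} at both $(p,q)$ and $(p,2)$, control the $Vu$ term by H\"older, and absorb using $\tau^{\be(q)} \ge CM$ so that $\mu = 1/\be(q)$. The only cosmetic difference is that the paper sums the two instances of Theorem~\ref{Carlpq} first and then absorbs the $CM\norm{u}_{L^q}$ term directly into the $\tau^{\be(q)}\norm{u}_{L^q}$ piece on the left, whereas you first absorb at $(p,q)$ to obtain $\norm{r^2\LP u}_{L^p} \le 2\norm{r^2(\LP u + Vu)}_{L^p}$ and then invoke the $(p,2)$ estimate separately; the two orderings are equivalent.
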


\begin{proof}
If $t > n$, then this result follows from \cite[Theorem 8]{DZ17}.
Therefore, we assume that $t \in \pb{\frac n 2, n}$.
We choose $p$ and $q$ so that $\frac 1 p - \frac 1 q = \frac 1 t$ and then
\begin{align*}
\be &= \frac 3 2 -  \frac n t
+ \brac{\frac{n + 2} 4 + \frac{n\pr{n-2}}{ 4t}}\pr{\frac 1 p - \frac 1 2 } 
- \frac{n\pr{n-2}} 4 \pr{\frac 1 p - \frac 1 2 }^2.
\end{align*}
Since $\be$ increases with respect to $\frac 1 p - \frac 1 2$ until it reaches a maximum at $\frac 1 p - \frac 1 2 = \frac{\pr{n+2}t +  n \pr{{n-2}}}{2nt\pr{n-2}}$ and $\frac{\pr{n+2}t +  n \pr{{n-2}}}{2nt\pr{n-2}} \ge \frac 1 n$ for $t \le n$, then $\be$ is maximized when $p$ is arbitrarily close to, but greater than, $\frac{2n}{n+2}$. 
Therefore, as in the statement, let $p = \frac{2n}{n + 2 - \eps}$.
Then we choose $q = \frac{2nt}{nt + 2t - 2n - \eps t}$ so that $\frac 1 p = \frac 1 t + \frac 1 q$.
Since $t \in \pb{\frac n 2, n}$, then $q \in \pr{2, \frac{2n}{n-2}}$ if $\eps < \eps_0 := 2\pr{2 - \frac n t}$.
With this choice of $p$ and $q$, we see from the definition in Theorem \ref{Carlpq} that $\be\pr{2} = \frac {3n+2} {4n} + \frac{3n-2} {8n}\eps$ and $\be\pr{q} = \pr{\frac {3n+2} {2n}} \pr{\frac{2t- n} {2t} } - \brac{1 + \frac{n-2}{4}\pr{\frac n t - 1} + \frac{n-2}{8} \eps}\frac{\eps}{2n}$.
Now we apply Theorem \ref{Carlpq} (ignoring the gradient term) with $q$ as given and with $q = 2$, then sum them to get
\begin{align*}
&\tau^{\be\pr{2}} \norm{\pr{\log r}^{-1} e^{-\tau \phi\pr{r}}u}_{L^2(r^{-n}dx)}
+ \tau^{\be\pr{q}} \norm{\pr{\log r}^{-1} e^{-\tau \phi\pr{r}}u}_{L^q(r^{-n}dx)} \\
&\le C \norm{\pr{\log r} e^{-\tau \phi\pr{r}} r^2 \LP u}_{L^p(r^{-n} dx)} \\
&\le C \norm{\pr{\log r} e^{-\tau \phi\pr{r}} r^2 \pr{\LP u + Vu}}_{L^p(r^{-n} dx)}
+ C \norm{\pr{\log r} e^{-\tau \phi\pr{r}} r^2 V u}_{L^p(r^{-n} dx)} \\
&\le C \norm{\pr{\log r} e^{-\tau \phi\pr{r}} r^2 \pr{\LP u + Vu}}_{L^p(r^{-n} dx)} \\
&+ C \norm{\pr{\log r}^2 r^{2- \frac n p + \frac n q}}_{L^\iny(dx)} \norm{V}_{L^t(dx)} \norm{\pr{\log r}^{-1} e^{-\tau \phi\pr{r}}u}_{L^q(r^{-n}dx)} ,
\end{align*}
where we have applied the triangle inequality along with H\"older's estimate.
Since $\frac 1 p - \frac 1 q = \frac{1}{t}$, then $2- \frac n p + \frac n q = 2 - \frac n t > 0$, and it follows that
\begin{align*}
&\tau^{\be\pr{2}} \norm{\pr{\log r}^{-1} e^{-\tau \phi\pr{r}}u}_{L^2(r^{-n}dx)}
+ \tau^{\be\pr{q}} \norm{\pr{\log r}^{-1} e^{-\tau \phi\pr{r}}u}_{L^q(r^{-n}dx)} \\
&\le C \norm{\pr{\log r} e^{-\tau \phi\pr{r}} r^2 \pr{\LP u + Vu}}_{L^p(r^{-n} dx)} 
+ C M \norm{\pr{\log r}^{-1} e^{-\tau \phi\pr{r}}u}_{L^q(r^{-n}dx)} .
\end{align*}
To absorb the last term into the lefthand side, we need to ensure that $\tau \ge \pr{C M}^{\frac 1 {\be\pr{q}}}$.
Since $\mu = \frac 1 {\be\pr{q}}$, the conclusion follows.
\end{proof}

Now we consider more general elliptic operators with a drift term.
The only difference between the following theorem and \cite[Theorem 7]{DZ17} is in the definition of $\mu$ for smaller values of $t$.
When $t < s$, the value for $\mu$ that we achieve here is better than the one in \cite{DZ17}.
However, compared to \cite{DZ17}, there is no improvement for the range of $t$.
Even though we are now working with Theorem \ref{Carlpq}, which improves upon Theorem \ref{CarlpqO} from \cite{DZ17}, because of the presence of a non-trivial $W$, we cannot increase the range of admissible $t$-values.

\begin{theorem}[cf. Theorem 7 in \cite{DZ17}]
Let $s \in \pb{\frac{3n-2}{2}, \iny}$ and $t \in \pb{ n\pr{\frac{3n-2}{5n-2}}, \iny}$.
Assume that for some $K, M \ge 1$, $\norm{W}_{L^s\pr{B_{R_0}}} \le K$ and $\norm{V}_{L^t\pr{B_{R_0}}} \le M$.
Then there exist constants $C_0$, $C_1 = C_1\pr{n, s, t}$, $C_2 = C_2\pr{n, s, t}$, and sufficiently small $R_0 < 1$  such that for any $u\in C^{\infty}_{0}(B_{R_0}(x_0)\setminus \set{x_0})$ and any large positive constant
$$\tau \ge C_1 K^{\kappa} + C_2 M^{\mu},$$
one has
\begin{align}
\tau^{\be} \norm{(\log r)^{-1} e^{-\tau \phi(r)}u}_{L^2(r^{-n}dx)}
&\leq  C_0 \norm{(\log r ) e^{-\tau \phi(r)} r^2\pr{ \LP u + W \cdot \gr u + V u}}_{L^p(r^{-n} dx)} ,
\label{main1}
\end{align}
where
$\disp p = \left\{\begin{array}{ll}
\frac{2s}{s+2} & t \in \pb{\frac{sn}{s+n}, \iny} \medskip \\
\frac{2 n t }{2n - 2t + nt } & t \in \pb{n\pr{\frac{3n-2}{5n-2}}, \frac{sn}{s+n}}
\end{array}\right.$,
$\disp \kappa = \left\{\begin{array}{ll}
\frac{4s}{2s - \pr{3n-2}} & t \in \pb{\frac{sn}{s+n}, \iny} \medskip \\
\frac{4t}{\pr{5 - \frac 2 n}t - \pr{3n-2}} & t \in \pb{n\pr{\frac{3n-2}{5n-2}}, \frac{sn}{s+n}}
\end{array}\right.$,
$\disp \mu = \left\{\begin{array}{ll}
\frac{4s}{6s - \pr{3n-2}} & t \in \brac{s, \iny} \medskip \\
\frac{4 s t}{6 s t + \pr{n+2}t -4ns + n\pr{n-2}\pr{1 - \frac t s}} & t \in \pr{\frac{sn}{s+n}, s} \medskip \\
\frac{2t}{2t -n} & t \in \pb{n\pr{\frac{3n-2}{5n-2}}, \frac{sn}{s+n}}
\end{array}\right.$, 
$\be = \be\pr{2}$ from Theorem \ref{Carlpq}, and $C_0 = 2C$, where $C$ is from Theorem \ref{Carlpq}.
\label{CarlpqVW}
\end{theorem}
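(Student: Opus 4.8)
\emph{Proof proposal.} The plan is to follow the proof of \cite[Theorem 7]{DZ17} almost verbatim, with the single change of invoking the sharper estimate of Theorem \ref{Carlpq} in place of Theorem \ref{CarlpqO}, and to choose the Lebesgue exponents $p,q$ so that the first‑order perturbation is absorbed by the gradient term on the left of \eqref{mainCar} and the zeroth‑order perturbation by the $L^q$‑term. Fix $u \in C_0^\infty\pr{B_{R_0}(x_0)\setminus\set{x_0}}$ and write $\LP u = \pr{\LP u + W\cdot\gr u + V u} - W\cdot\gr u - V u$. Substituting this into the right side of \eqref{mainCar} and applying the triangle inequality, the task reduces to bounding $\norm{(\log r)e^{-\tau\phi}r^2 V u}_{L^p(r^{-n}dx)}$ and $\norm{(\log r)e^{-\tau\phi}r^2 W\cdot\gr u}_{L^p(r^{-n}dx)}$ by small constant multiples of, respectively, the $L^q$‑norm of $u$ and the $r\gr u$‑norm on the left of \eqref{mainCar}.

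For the exponents I would split by the size of $t$. When $t \ge \frac{sn}{s+n}$, set $p = \frac{2s}{s+2}$, so that $\frac 1 p - \frac 1 2 = \frac 1 s$; if in addition $t \ge s$ take $q = 2$, and otherwise take $q$ defined by $\frac 1 q = \frac 1 p - \frac 1 t$, which lands in $\brac{2,\frac{2n}{n-2}}$ precisely when $\frac{sn}{s+n} \le t \le s$. When $t < \frac{sn}{s+n}$, set $q = \frac{2n}{n-2}$ and $p$ by $\frac 1 p = \frac 1 q + \frac 1 t$, i.e.\ $p = \frac{2nt}{2n-2t+nt}$, so that $\frac 1 p - \frac 1 2 = \frac 1 t - \frac 1 n > \frac 1 s$. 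The hypotheses $s > \frac{3n-2}{2}$ and $t > n\pr{\frac{3n-2}{5n-2}}$ are exactly what guarantee $\frac{2n}{n+2} < p \le 2 \le q \le \frac{2n}{n-2}$ in every case, so Theorem \ref{Carlpq} applies; when $q \ne 2$ I apply it once with this $q$ and once with $q = 2$ (same $p$) and add, so that the left side carries both a $\tau^{\be\pr{q}}$‑weighted $L^q$‑norm of $u$ and a $\tau^{\be\pr{2}}$‑weighted $L^2$‑norm of $u$, alongside the $\tau^{\be_1}$‑weighted $L^2$‑norm of $r\gr u$.

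Next, Hölder's inequality with exponents $p, a, \infty$ where $\frac 1 a = \frac 1 p - \frac 1 q$, together with $L^t\pr{B_{R_0}} \hookrightarrow L^a\pr{B_{R_0}}$ when $a < t$, yields $\norm{(\log r)e^{-\tau\phi}r^2 V u}_{L^p(r^{-n}dx)} \le C M \norm{(\log r)^{-1}e^{-\tau\phi}u}_{L^q(r^{-n}dx)}$, the residual $L^\infty$‑factor being a positive power of $r$ times a power of $\log r$, hence bounded on $B_{R_0}$. Likewise, writing $r^2 W\cdot\gr u = r W\cdot\pr{r\gr u}$ and using Hölder with exponents $p, b, 2$ where $\frac 1 b = \frac 1 p - \frac 1 2 \ge \frac 1 s$, together with $L^s\pr{B_{R_0}} \hookrightarrow L^b\pr{B_{R_0}}$, gives $\norm{(\log r)e^{-\tau\phi}r^2 W\cdot\gr u}_{L^p(r^{-n}dx)} \le C K \norm{(\log r)^{-1}e^{-\tau\phi}r\gr u}_{L^2(r^{-n}dx)}$. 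Taking $\tau \ge \pr{2CK}^{1/\be_1} + \pr{2CM}^{1/\be\pr{q}}$ lets both terms be absorbed into the left side (the $V$‑term into half of the $L^q$‑norm, which is the $L^2$‑norm when $q=2$), leaving $\tau^{\be\pr{2}}\norm{(\log r)^{-1}e^{-\tau\phi}u}_{L^2(r^{-n}dx)} \le 2C\norm{(\log r)e^{-\tau\phi}r^2\pr{\LP u + W\cdot\gr u + V u}}_{L^p(r^{-n}dx)}$, which is \eqref{main1} with $C_0 = 2C$, $\be = \be\pr{2}$, $\kappa = 1/\be_1$, $\mu = 1/\be\pr{q}$. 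It then remains only to check — a routine substitution into the formulas of Theorem \ref{Carlpq} — that $1/\be_1$ and $1/\be\pr{q}$ equal the claimed $\kappa$ and $\mu$: for $t \ge \frac{sn}{s+n}$ one gets $\be_1 = \frac{2s-(3n-2)}{4s}$ and $\be\pr{2} = \frac{6s-(3n-2)}{4s}$, while for $\frac{sn}{s+n} < t < s$ the value $\frac 1 2 - \frac 1 q = \frac 1 t - \frac 1 s$ produces the middle formula for $\mu$; for $t < \frac{sn}{s+n}$ one gets $\be_1 = \frac{(5n-2)t - n(3n-2)}{4nt}$ and $\be\pr{\frac{2n}{n-2}} = \frac{2t-n}{2t}$, and the stated ranges of $s,t$ are precisely the positivity conditions for these quantities (and for $\be\pr{q}$).

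The main obstacle is the weighted‑measure bookkeeping in the Hölder step: one must verify that after placing $u$ (resp.\ $r\gr u$) in the $L^q(r^{-n}dx)$ (resp.\ $L^2(r^{-n}dx)$) factor and $V$ (resp.\ $W$) in an $L^a(dx)$ (resp.\ $L^b(dx)$) factor, all leftover powers of $r$ are nonnegative so that the $L^\infty\pr{B_{R_0}}$ factor is finite — and, secondarily, that the case split on $(s,t)$ exhausts the stated range while keeping $p \in \pr{\frac{2n}{n+2},2}$, $q \in \brac{2,\frac{2n}{n-2}}$ and $\be_1, \be\pr{q} > 0$. Since $\be \ge \be_0$ with equality exactly when $q = 2$, i.e.\ when $t \ge s$, the improvement over \cite[Theorem 7]{DZ17} appears precisely in the range $t < s$, which is why $\mu$ improves there while $\kappa$ is unchanged.
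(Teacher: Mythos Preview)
Your proposal is correct and follows essentially the same approach as the paper's own proof: the same case split on $t$ relative to $s$ and $\frac{sn}{s+n}$, the identical choices of $p$ and $q$ in each case, the same application of Theorem~\ref{Carlpq} at both $q$ and $q=2$ followed by the triangle and H\"older inequalities to peel off the $W\cdot\gr u$ and $Vu$ terms, and the same absorption via $\tau \ge (2cK)^{1/\be_1} + (2cM)^{1/\be(q)}$. The only small imprecision is that the lower bounds on $s$ and $t$ are needed primarily to force $\be_1 > 0$ and $\be(q) > 0$ (your second formulation), rather than merely to place $p,q$ in the admissible range; but you catch this as well.
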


\begin{proof}
If we add inequality \eqref{mainCar} from Theorem \ref{Carlpq} with $q = 2$ to the same inequality with $q$ arbitrary, we see that
\begin{align*}
&\tau^{\be\pr{2}} \norm{(\log r)^{-1} e^{-\tau \phi(r)}u}_{L^2(r^{-n}dx)} \\
&+\tau^{\be\pr{q}} \norm{(\log r)^{-1} e^{-\tau \phi(r)}u}_{L^q(r^{-n}dx)} 
+\tau^{\be_1} \norm{(\log r )^{-1} e^{-\tau \phi(r)}r \gr u}_{L^2(r^{-n}dx)} \\
&\le 2 C \norm{(\log r) e^{-\tau \phi(r)} r^2 (\LP u)}_{L^p(r^{-n} dx)} \\
&\le 2 C \norm{(\log r) e^{-\tau \phi(r)} r^2 (\LP u+ W\cdot \gr u + V u)}_{L^p(r^{-n} dx)} \\
&+ 2 C \norm{(\log r) e^{-\tau \phi(r)} r^2 W\cdot \gr u}_{L^p(r^{-n} dx)}
+ 2 C \norm{(\log r) e^{-\tau \phi(r)} r^2 V u}_{L^p(r^{-n} dx)} \\
&\le 2 C \norm{(\log r) e^{-\tau \phi(r)} r^2 (\LP u+ W\cdot \gr u + V u)}_{L^p(r^{-n} dx)} \\
&+ c \norm{V}_{L^{\frac{pq}{q-p}}\pr{B_{R_0}}} \norm{(\log r)^{-1} e^{-\tau \phi(r)} u}_{L^q(r^{-n} dx)}
+ c \norm{W}_{L^{\frac{2p}{2-p}}\pr{B_{R_0}}} \norm{(\log r)^{-1} e^{-\tau \phi(r)} r \gr u}_{L^2(r^{-n} dx)},
\end{align*}
where we have applied the triangle inequality and H\"older's inequality.
We have also used the ranges of $p$ and $q$ to deduce that $\norm{\pr{\log r}^2 r^{1 + \frac n 2 - \frac n p}}_{L^{\iny}\pr{B_{R_0}}} \le c$ and $\norm{\pr{\log r}^2 r^{2 + \frac n q - \frac n p}}_{L^{\iny}\pr{B_{R_0}}} \le c$.

If we choose $p$ and $q$ so that $\frac{pq}{q-p} \le t$ and $\frac{2p}{2-p} \le s$, then we may use H\"older's inequality in combination with the bounds on $V$ and $W$ to bound the last two terms.
A natural choice is to set $p = \frac{2s}{s+2}$ and $q = \frac{2st}{st+2t-2s}$.
It is clear that $s \in \left(\frac{2n}{n+2}, 2\right]$, however, since the Carleman estimate requires that $q \in \brac{2, \frac{2n}{n-2}}$, then this choice only works when $t \in \brac{\frac{sn}{s+n}, s}$.
Therefore, we have to analyze the inequality in cases according to the relationship between $s$ and $t$.
These ranges exactly coincide with those in \cite{DZ17}.

If $t \in \brac{s, \iny}$, then we set $p = \frac{2s}{s+2}$ and $q = 2$.
For this case, $\frac{2p}{2-p} = s$ and $\frac{pq}{q-p} = \frac{2p}{2-p} = s \le t$.
Moreover, $\be_1 = \frac{1}{2}-\frac{3n-2}{4s}$ and $\be = 1 + \be_1$.
The lower bound on $s$ ensures that $\be_1 > 0$.

Next, if $t \in \pr{\frac{sn}{s+n}, s}$, then we set $p = \frac{2s}{s+2}$ and $q = \frac{2st}{st+2t-2s}$ so that $\frac{pq}{q-p} = t$ and $\frac{2p}{2-p} = s$. 
In this case, $\be_1 = \frac{1}{2}-\frac{3n-2}{4s}$ and $\be = \frac 3 2 - \frac{3n-2} {4s} -  \pr{1 - \frac{n-2} {4s}} n\pr{\frac 1 t - \frac 1 s} = \be_1 + 1 -  \pr{1 - \frac{n-2} {4s}} n\pr{\frac 1 t - \frac 1 s}$.
The lower bound on $s$ and $t$ ensures that $\be_1 > 0$ and $\be > 0$.

Finally, if $t \in \pb{n\pr{\frac{3n-2}{5n-2}}, \frac{sn}{s+n}}$, then we choose $p = \frac{2nt}{2n-2t+nt}$ and $q = \frac{2n}{n-2}$ so that $\frac{pq}{q-p} = t$ and $\frac{2p}{2-p} = \frac{nt}{n-t} \le s$.
Here we have $\be_1 = \frac{1}{2}-\frac{3n-2}{4}\pr{\frac 1 t - \frac 1 n}$ and $\be = 1 - \frac{n}{2t} = \be_1 +  \frac{n-2} 4 \pr{\frac 1 t - \frac 1 n }$.
In this case, it is the lower bound on $t$ that ensures that $\be_1 > 0$, from which it follows that $\be > 0$ as well.

In all three cases, we have that
\begin{align*}
&\pr{\tau^{\be} - cM} \norm{(\log r)^{-1} e^{-\tau \phi(r)}u}_{L^q(r^{-n}dx)} 
+\pr{\tau^{\be_1} - cK} \norm{(\log r )^{-1} e^{-\tau \phi(r)}r \gr u}_{L^2(r^{-n}dx)} \\
&+ \tau^{\be\pr{2}} \norm{(\log r)^{-1} e^{-\tau \phi(r)}u}_{L^2(r^{-n}dx)} 
\le 2 C \norm{(\log r) e^{-\tau \phi(r)} r^2 (\LP u+ W\cdot \gr u + V u)}_{L^p(r^{-n} dx)}.
\end{align*}
Choosing $\tau \ge \pr{2 cK}^{\frac 1 {\be_1}}+ \pr{2 c M}^{\frac 1 {\be}}$ leads to the conclusion of the theorem.
\end{proof}

\section{Order of Vanishing Results}
\label{OofVProof}

Here we describe the proofs of Theorems \ref{OofV} and \ref{OofVW}.
Given that the arguments exactly follow those that appeared in \cite{DZ17}, where now the new Carleman estimates described by Theorems \ref{CarlpqV} and \ref{CarlpqVW} are used, we refer the reader to Section 5 of \cite{DZ17} for the full details.

The first step in the proof is to use the Carleman estimate for the operator to establish a three-ball inequality.
The arguments for the proofs of these three-ball inequalities are based on those in \cite{Ken07}.

\begin{lemma}[cf. Lemma 7 in \cite{DZ17}]
Let $0 < r_0< r_1< R_1 < R_0$, where $R_0 < 1$ is sufficiently small.
Let $t \in \pb{ \frac{n}{2}, \iny}$.
Assume that for some $M \ge 1$, $\norm{V}_{L^t\pr{B_{R_0}}} \le M$.
Let $u : B_{R_{0}} \to \C$ be a solution to \eqref{goal} in $B_{R_0}$.
Then for any $\eps \in \pr{0, \eps_0}$, there exists a constant $C = C\pr{n, t, \eps}$ such that
\begin{align}
\norm{u}_{L^\infty \pr{B_{3r_1/4}}}
&\le C |\log r_1| F\pr{r_1}^{\frac n 2} \pr{|\log r_0| F\pr{r_0} \norm{u}_{L^\iny(B_{2r_0})}}^{k_0} \pr{ |\log R_1|F\pr{R_1} \norm{u}_{L^\iny(B_{R_1})}}^{1 - k_0} \nonumber  \\
&+C F\pr{r_1}^{\frac n 2}  \pr{\frac{R_1}{r_1}}^{\frac n
2}\frac{\abs{\log r_0}}{\abs{\log R_1}} e^{C_1 M^\mu \pr{\phi\pr{\frac{R_1}{2}}-\phi(r_0)}} \norm{u}_{L^\iny(B_{2r_0})},
\label{threeV}
\end{align}
where $\disp k_0 = \frac{\phi(\frac{R_1}{2})-\phi(r_1)}{\phi(\frac{R_1}{2})-\phi(r_0)}$, $F\pr{r} = 1 + r M^{\frac{t}{2t-n}}$, and $\mu$ and $C_1$ are as in Theorem \ref{CarlpqV}.
\end{lemma}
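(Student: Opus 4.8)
The plan is to follow the scheme from Kenig's notes \cite{Ken07}, feeding in the Carleman estimate of Theorem \ref{CarlpqV} in place of the classical one, exactly as in \cite[Section 5]{DZ17}. First I would fix a smooth cutoff $\eta$ with $\eta\equiv 1$ on $B_{R_1/2}\setminus B_{2r_0}$, $\supp\eta\su B_{R_1}\setminus B_{r_0}$, and $\abs{\gr\eta}\lesssim r_0^{-1}$, $\abs{\LP\eta}\lesssim r_0^{-2}$ on $B_{2r_0}\setminus B_{r_0}$ while $\abs{\gr\eta}\lesssim R_1^{-1}$, $\abs{\LP\eta}\lesssim R_1^{-2}$ on $B_{R_1}\setminus B_{R_1/2}$. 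After the usual mollification (justified since $u\in W^{2,p}_{loc}$), the function $v:=\eta u$ lies in $C_0^\iny\pr{B_{R_0}\pr{x_0}\setminus\set{x_0}}$, and because $\LP u + Vu = 0$,
\[
\LP v + Vv = 2\gr\eta\cdot\gr u + u\,\LP\eta =: g_0 + g_1,
\]
with $g_0$ supported in $B_{2r_0}\setminus B_{r_0}$ and $g_1$ supported in $B_{R_1}\setminus B_{R_1/2}$.

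Next I would apply \eqref{main1} from Theorem \ref{CarlpqV} to $v$ with a parameter $\tau\ge C_1 M^\mu$. On the left-hand side I would restrict the weighted $L^2$-norm to $B_{3r_1/4}\setminus B_{2r_0}$, where $v=u$; since $\phi$ is increasing near the origin, $e^{-\tau\phi\pr{r}}\ge e^{-\tau\phi\pr{3r_1/4}}$ there, giving a lower bound of the form $c\,\tau^{\be}\abs{\log r_1}^{-1}e^{-\tau\phi\pr{3r_1/4}}\norm{u}_{L^2\pr{B_{3r_1/4}\setminus B_{2r_0},\,r^{-n}dx}}$. On the right-hand side I would split the $L^p$-norm according to the supports of $g_0$ and $g_1$; on each annulus $e^{-\tau\phi}$ is comparable to its value at the inner radius, so the two contributions are controlled by
\[
C\abs{\log r_0}\,e^{-\tau\phi\pr{r_0}}r_0^{2-n/p}\pr{r_0^{-1}\norm{\gr u}_{L^p\pr{B_{2r_0}}} + r_0^{-2}\norm{u}_{L^p\pr{B_{2r_0}}}}
\]
and the analogous expression near $R_1$ with $e^{-\tau\phi\pr{R_1/2}}$. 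Caccioppoli's inequality for $\LP u + Vu = 0$, rescaled to each ball, absorbs the gradient terms into $\norm{u}_{L^\iny}$ on twice-larger balls, and the de Giorgi--Nash--Moser local boundedness estimate — applied after dilating $B_r$ to the unit ball, where the potential has $L^t$-norm $r^{2-n/t}M\lesssim F\pr{r}$ — converts the remaining $L^p$ and $L^2$ norms of $u$ into $L^\iny$ norms at the cost of the factors $F\pr{r_0}^{n/2},F\pr{r_1}^{n/2},F\pr{R_1}^{n/2}$ together with powers of $\abs{\log r}$ and of $r$. Assembling these pieces should produce an inequality of the shape
\[
\tau^{\be}\abs{\log r_1}^{-1}e^{-\tau\phi\pr{r_1}}\norm{u}_{L^\iny\pr{B_{3r_1/4}\setminus B_{2r_0}}} \le C\,P_0\,e^{-\tau\phi\pr{r_0}} + C\,P_1\,e^{-\tau\phi\pr{R_1/2}},
\]
where $P_0 = \abs{\log r_0}F\pr{r_0}^{n/2}\norm{u}_{L^\iny\pr{B_{2r_0}}}$ and $P_1 = \abs{\log R_1}F\pr{R_1}^{n/2}\pr{R_1/r_1}^{n/2}\norm{u}_{L^\iny\pr{B_{R_1}}}$, up to the $\abs{\log}$- and $F$-factors recorded in \eqref{threeV}.

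The last step would be the optimization in $\tau$. Dropping $\tau^{-\be}\le 1$ and absorbing $\abs{\log r_1}$, the inequality reads $\norm{u}_{L^\iny\pr{B_{3r_1/4}\setminus B_{2r_0}}}\lesssim P_0\,e^{\tau\pr{\phi\pr{r_1}-\phi\pr{r_0}}} + P_1\,e^{-\tau\pr{\phi\pr{R_1/2}-\phi\pr{r_1}}}$. If the unconstrained minimizer $\tau_\ast = \frac{\log\pr{P_1/P_0}}{\phi\pr{R_1/2}-\phi\pr{r_0}}$ (modulo the slowly varying coefficients) satisfies $\tau_\ast\ge C_1 M^\mu$, then choosing $\tau=\tau_\ast$ balances the two terms and gives the interpolation bound $C\,P_0^{k_0}P_1^{1-k_0}$ with $k_0 = \frac{\phi\pr{R_1/2}-\phi\pr{r_1}}{\phi\pr{R_1/2}-\phi\pr{r_0}}$, which is the first term of \eqref{threeV}. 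Otherwise $\tau_\ast < C_1 M^\mu$ forces the choice $\tau=C_1 M^\mu$; then $\phi\pr{r_1}-\phi\pr{r_0}\le \phi\pr{R_1/2}-\phi\pr{r_0}$ handles the first term, while the hypothesis $\tau_\ast<C_1 M^\mu$ rearranges to $P_1\lesssim P_0\,e^{C_1 M^\mu\pr{\phi\pr{R_1/2}-\phi\pr{r_0}}}$ and thereby controls the second term, yielding the leftover term $C F\pr{r_1}^{n/2}\pr{R_1/r_1}^{n/2}\frac{\abs{\log r_0}}{\abs{\log R_1}}e^{C_1 M^\mu\pr{\phi\pr{R_1/2}-\phi\pr{r_0}}}\norm{u}_{L^\iny\pr{B_{2r_0}}}$. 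Since $\norm{u}_{L^\iny\pr{B_{3r_1/4}}}\le \norm{u}_{L^\iny\pr{B_{3r_1/4}\setminus B_{2r_0}}} + \norm{u}_{L^\iny\pr{B_{2r_0}}}$ and the extra term is dominated by the leftover term, combining the two cases gives \eqref{threeV}.

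I expect the main obstacle to be the bookkeeping in the middle two steps: carefully tracking the weight $r^{-n}dx$, the $\pr{\log r}^{\pm1}$ factors, and especially the $F\pr{r}$ dependence when passing between the weighted Carleman norms and the $L^\iny$ norms via Caccioppoli and local boundedness. The low integrability of $V$ (with $t$ barely above $\frac n 2$) forces the use of the scale-invariant forms of these elliptic estimates rather than their textbook versions, and it is precisely that scaling which produces the factors $F\pr{r} = 1 + rM^{t/\pr{2t-n}}$.
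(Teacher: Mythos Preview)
Your proposal is correct and follows precisely the approach of the paper, which in turn defers to \cite[Lemma 7]{DZ17}: the cutoff construction, application of Theorem~\ref{CarlpqV} to $\eta u$, Caccioppoli and de Giorgi--Nash--Moser passage to $L^\iny$ norms (producing the $F(r)$ factors), and the two-case optimization in $\tau$ are exactly the steps carried out there. The only work left is the bookkeeping you flagged---tracking the precise powers of $F$ and the $\pr{\log r}^{\pm1}$ weights---which is done verbatim in \cite[Section~5]{DZ17}.
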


\begin{proof}
We follow the proof of \cite[Lemma 7]{DZ17} with Theorem \ref{CarlpqV} used in place of \cite[Theorem 8]{DZ17}.
\end{proof}

In the setting where we have a drift term, the three-ball inequality takes the following form.

\begin{lemma}[cf. Lemma 6 in \cite{DZ17}]
Let $0 < r_0< r_1< R_1 < R_0$, where $R_0 < 1$ is sufficiently small.
Let $s \in \pb{ \frac{3n-2}{2}, \iny}$, $t \in \pb{ n\pr{\frac{3n-2}{5n-2}}, \iny}$.
Assume that for some $K, M \ge 1$, $\norm{W}_{L^s\pr{B_{R_0}}} \le K$ and $\norm{V}_{L^t\pr{B_{R_0}}} \le M$.
Let $u : B_{R_0} \to \C$ be a solution to $\LP u + W \cdot \gr u + V u = 0$ in $B_{R_0}$.
Then there exists a constant $C = C\pr{n, t, s}$ such that
\begin{align}
\norm{u}_{L^\infty \pr{B_{3r_1/4}}} 
&\le C F\pr{r_1}^{\frac n 2}  |\log r_1| \brac{ (K+|\log r_0|)F\pr{r_0} \norm{u}_{L^\iny(B_{2r_0})}}^{k_0} \nonumber \\
&\times \brac{(K+|\log R_1|)  F\pr{R_1}\norm{u}_{L^\iny(B_{R_1})}}^{1 - k_0} \nonumber \\
&+C F\pr{r_1}^{\frac n 2} \pr{\frac{R_1 }{r_1}}^{\frac n 2} \pr{1 +\frac{|\log r_0|}{K}} \nonumber \\
&\times \exp\brac{\pr{C_1 K^\kappa + C_2 M^\mu} \pr{\phi\pr{\frac{R_1}{2}}-\phi(r_0)}} \norm{u}_{L^\iny(B_{2r_0})},
\label{threeVW}
\end{align}
where $\disp k_0 = \frac{\phi(\frac{R_1}{2})-\phi(r_1)}{\phi(\frac{R_1}{2})-\phi(r_0)}$, $F\pr{r} = 1 + r K^{\frac{s}{s-n}} + r M^{\frac{t}{2t-n}}$, and $\kappa$, $\mu$, $C_1$, $C_2$ are from Theorem \ref{CarlpqVW}.
\label{threeBalls}
\end{lemma}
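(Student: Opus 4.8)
\emph{Proof strategy.} The plan is to deduce the three-ball inequality \eqref{threeVW} directly from the Carleman estimate of Theorem~\ref{CarlpqVW} by the standard cutoff argument of Kenig~\cite{Ken07}, exactly as in the proof of \cite[Lemma~6]{DZ17}. Fix $0<r_0<r_1<R_1<R_0$ and choose a smooth cutoff $\eta$ that vanishes on $B_{r_0}$, equals $1$ on $B_{R_1/2}\setminus B_{2r_0}$, is supported in $B_{R_1}\setminus B_{r_0}$, and satisfies $\abs{\gr^j\eta}\lesssim r_0^{-j}$ on the inner transition annulus $B_{2r_0}\setminus B_{r_0}$ and $\abs{\gr^j\eta}\lesssim R_1^{-j}$ on the outer transition annulus $B_{R_1}\setminus B_{R_1/2}$. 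Applying Theorem~\ref{CarlpqVW} to $v=\eta u$ and using that $u$ solves $\LP u+W\cdot\gr u+Vu=0$, one has $\LP v+W\cdot\gr v+Vv=2\gr\eta\cdot\gr u+(\LP\eta)u+(W\cdot\gr\eta)u$, which is supported in the two transition annuli; hence the right-hand side of the Carleman estimate is a sum of an inner contribution carrying the weight $e^{-\tau\phi}$ evaluated near $r_0$ and an outer contribution carrying the weight near $R_1/2$.

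\emph{Estimating the two sides.} On the left, I would restrict the $L^2(r^{-n}dx)$ norm of $\pr{\log r}^{-1}e^{-\tau\phi}v$ to $B_{3r_1/4}$, where $\eta\equiv1$ and where $\pr{\log r}^{-1}e^{-\tau\phi(r)}$ is comparable to its value at $r_1$, and then upgrade $\norm{u}_{L^2(B_{3r_1/4})}$ to $\norm{u}_{L^\infty(B_{3r_1/4})}$ via interior $L^2$-to-$L^\infty$ estimates for $\LP+W\cdot\gr+V$ (de Giorgi--Nash--Moser with the singular lower order terms, followed by rescaling). On the right, on each transition annulus I would bound $\norm{r^2\gr u}$ by $\norm{u}_{L^\infty}$ on a concentric enlargement via Caccioppoli's inequality, and bound $\norm{r^2(W\cdot\gr\eta)u}_{L^p}$ by H\"older's inequality in terms of $\norm{W}_{L^s}$ and $\norm{u}_{L^\infty}$; since on the inner annulus $r\ge r_0$ (so $\phi(r)\ge\phi(r_0)$ and $e^{-\tau\phi(r)}\le e^{-\tau\phi(r_0)}$) and on the outer annulus $r\ge R_1/2$, the weights localize as claimed. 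The scale factors produced when passing from these local estimates to the unit scale—whose constants depend on $\rho^{1-n/s}\norm{W}_{L^s}$ and $\rho^{2-n/t}\norm{V}_{L^t}$—are exactly what generate the factors $F(r)=1+rK^{s/(s-n)}+rM^{t/(2t-n)}$, and the $K+\abs{\log r}$ factors arise from combining the $\gr u$ and $u$ contributions on each annulus. After absorbing all polynomial-in-$\tau$ prefactors $\tau^{\be},\tau^{\be_1}$ (legitimate because $\be,\be_1>0$ on the stated ranges of $s,t$, as established in the proof of Theorem~\ref{CarlpqVW}), one arrives at an inequality of the schematic form
$$
e^{-\tau\phi(r_1)}\norm{u}_{L^\infty(B_{3r_1/4})}\ \lesssim\ e^{-\tau\phi(r_0)}\,\Phi_0\ +\ e^{-\tau\phi(R_1/2)}\,\Phi_1,
$$
where $\Phi_0$ collects the inner data $(K+\abs{\log r_0})F(r_0)\norm{u}_{L^\infty(B_{2r_0})}$ and $\Phi_1$ the outer data $(K+\abs{\log R_1})F(R_1)\norm{u}_{L^\infty(B_{R_1})}$, each times the relevant $F(r_1)^{n/2}\abs{\log r_1}$ and geometric prefactors.

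\emph{Optimizing in $\tau$.} The final step is to choose $\tau$, subject to the constraint $\tau\ge C_1K^\kappa+C_2M^\mu$ imposed by Theorem~\ref{CarlpqVW}. The unconstrained choice $\tau^\ast$ that balances the two right-hand terms, when substituted back, yields the interpolation-type bound with exponent $k_0=\dfrac{\phi(R_1/2)-\phi(r_1)}{\phi(R_1/2)-\phi(r_0)}$ on the inner data and $1-k_0$ on the outer data, which is the first line of \eqref{threeVW}; here $k_0$ appears automatically because it is the barycentric weight of $\phi(r_1)$ between $\phi(R_1/2)$ and $\phi(r_0)$. If instead $\tau^\ast<C_1K^\kappa+C_2M^\mu$, I would take $\tau=C_1K^\kappa+C_2M^\mu$; then the inner term dominates, and dividing through by $e^{-\tau\phi(r_1)}$ leaves the additive error term $F(r_1)^{n/2}\pr{R_1/r_1}^{n/2}\bigl(1+\abs{\log r_0}/K\bigr)\exp\bigl[(C_1K^\kappa+C_2M^\mu)(\phi(R_1/2)-\phi(r_0))\bigr]\norm{u}_{L^\infty(B_{2r_0})}$, matching the second line of \eqref{threeVW}.

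\emph{Main obstacle.} I expect no conceptual difficulty—the argument is structurally identical to \cite[Lemma~6]{DZ17}, the only new input being that Theorem~\ref{CarlpqVW} (with its improved $\mu$ for $t<s$) replaces \cite[Theorem~7]{DZ17}—so the real work is bookkeeping: keeping the interplay of the weight $e^{-\tau\phi}$, the prefactors $\tau^{\be},\tau^{\be_1}$, the logarithmic factors, and the scale-dependent constants $F(r)$ consistent through the cutoff and interior-estimate steps, while ensuring the constraint on $\tau$ remains compatible with the two-case optimization. Accordingly, the cleanest write-up is to run the cutoff-and-optimization scheme once and then observe that inserting the parameters $(\kappa,\mu,p,\be,C_0,C_1,C_2)$ from Theorem~\ref{CarlpqVW} produces precisely \eqref{threeVW}.
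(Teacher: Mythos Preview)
Your proposal is correct and follows essentially the same approach as the paper: the paper's proof consists of the single sentence ``We follow the proof of \cite[Lemma~6]{DZ17} with Theorem~\ref{CarlpqVW} used in place of \cite[Theorem~7]{DZ17},'' and your outline is precisely a faithful expansion of that argument---the cutoff-and-optimize scheme of Kenig~\cite{Ken07} applied with the new Carleman estimate substituted in. Your identification of the main obstacle as pure bookkeeping is accurate.
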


\begin{proof}
We follow the proof of \cite[Lemma 6]{DZ17} with Theorem \ref{CarlpqVW} used in place of \cite[Theorem 7]{DZ17}.
\end{proof}

To prove Theorems \ref{OofV} and \ref{OofVW}, we first our three-ball inequalities in a propagation of smallness argument to establish a lower bound for the solution in $B_r$.
Then we use the three-ball inequalities again to establish the order of vanishing estimate.

\begin{proof} [Proof of Theorem  \ref{OofV}] 
We follow the proof of \cite[Theorem 3]{DZ17} with estimate \eqref{threeV} used in place of \cite[estimate (5.10)]{DZ17}.
\end{proof}

\begin{proof} [Proof of Theorem  \ref{OofVW}] 
We follow the proof of \cite[Theorem 1]{DZ17} with estimate \eqref{threeVW} used in place of \cite[estimate (5.1)]{DZ17}.
\end{proof}

\section{Proof of the $L^p - L^q$ Carleman estimate}
\label{CarlProofs}

In this section, we prove the new $L^p - L^q$ Carleman estimate that is stated in Theorem \ref{Carlpq}.
To do this, we first prove Proposition \ref{CarL+L-pq}, a different $L^p - L^q$ Carleman.
Then we interpolate the endpoint result of Proposition \ref{CarL+L-pq} (i.e. the $L^p -L^{\frac{2n}{n-2}}$ version) with the $L^p - L^2$ Carleman estimate from \cite{DZ17} presented above in Theorem \ref{CarlpqO}.
Therefore, the essence of the proof is given in Proposition \ref{CarL+L-pq}.

To prove Proposition \ref{CarL+L-pq}, we first use polar coordinates to rewrite the Laplacian as a product of two first order operators.
Next, we project onto the eigenspaces and solve the resulting second order ODE.
Then we employ the eigenfunction estimates of Sogge from \cite{Sog86} to estimate the resulting series.
The crucial ingredients in this step are $L^p - L^q$ estimates for series of eigenfunctions as presented in Lemma \ref{upDown}.
The rough outline of the proof follows the proof from \cite{DZ17}, based on the work in \cite{Reg99}, which uses many of the ideas from \cite{Jer86}.

There are a number of differences between Proposition \ref{CarL+L-pq} and the proof in \cite{DZ17} that make it novel.
In \cite{DZ17}, we decompose the Laplacian into a product of first order operators, then prove Carleman estimates for each operator.
Here, we use that the Laplacian can be written as a product of two first order operators to generate a second order ODE, then we solve that ODE and estimate the operator norm directly.
Therefore, instead of iterating the Carleman estimates, we iterate applications of Young's inequality for convolution.
The techniques from \cite{DZ17} established $L^p - L^2$ Carleman estimates with a gradient term, then employed Sobolev inequalities to extend $q$ to $\frac{2n}{n-2}$.
However, given the obstruction to Carleman estimates with gradient (see for example \cite{Jer86} and our discussion in \cite{DZ17}), this technique didn't allow us to treat values of $t$ near $\frac n2$.
By avoiding the use of a Sobolev inequality here, and directly proving an $L^p - L^q$ estimate, we get a stronger result.
Consequently, we may now consider all $t > \frac n 2$.

We introduce polar coordinates in $\R^n\backslash \{0\}$ by setting $x=r\omega$, with $r=|x|$ and $\omega=(\omega_1,\cdots,\omega_n)\in S$, where we'll use the notation $S = S^{n-1}$.
With $t=\log r$,
$$ \frac{\partial }{\partial x_j}=e^{-t}(\omega_j\partial_t+  \Omega_j), \quad 1\le j\le n, $$
where $\Omega_j$ are vector fields in $S$ that satisfy
$$ \sum^{n}_{j=1}\omega_j\Omega_j=0 \quad \mbox{and} \quad \sum^{n}_{j=1}\Omega_j\omega_j=n-1.$$
In the new coordinate system, the Laplace operator takes the form
\begin{equation}
e^{2t} \LP=\partial^2_t +(n-2)\partial_t+\LP_{\omega},
\label{laplace}
\end{equation}
where $\disp \LP_\omega=\sum_{j=1}^n \Omega^2_j$ is the Laplace-Beltrami operator on $S$.
The eigenvalues for $-\LP_\omega$ are $k(k+n-2)$, where $k\in \mathbb{N}$.
The corresponding eigenspace is $E_k$, the space of spherical harmonics of degree $k$.
We use the notation $\norm{\cdot}_{L^2(dtd\omega)}$ to denote the $L^2$ norm on $(-\iny, \iny)\times S$.
 Let
$$\Lambda=\sqrt{\frac{(n-2)^2}{4}-\LP_\omega}.$$ The operator $\Lambda$ is a
first-order elliptic pseudodifferential operator on $L^2(S)$.
The eigenvalues for the operator $\Lambda$ are $k+\frac{n-2}{2}$, with corresponding eigenspace $E_k$.
That is, for any $v\in C^\iny_0(S)$,
\begin{equation}
\pr{\Lambda - \frac{n-2}{2}} v= \sum_{k\geq 0}k P_k v,
\label{ord}
\end{equation}
where $P_k$ is the projection operator from $L^2(S)$ onto $E_k$.
We remark that the projection operator, $P_k$, acts only on the angular variables.
In particular, $P_k v\pr{t, \om} = P_k v\pr{t, \cdot} \pr{\om}$.
Now define
\begin{equation} L^\pm=\partial_t+\frac{n-2}{2}\pm \Lambda.
\label{use}
\end{equation} From the equation (\ref{laplace}), it follows that
\begin{equation*}
e^{2t} \LP=L^+L^-=L^-L^+.
\end{equation*}

Recall that in the Carleman estimate given by Theorem \ref{Carlpq}, we use the radial weight function
$$\phi(r)=\log r + \log \pr{\log r}^2.$$
With $r=e^t$, define the weight function in terms of $t$ to be
$$\varphi(t)=\phi(e^t)=t + \log t^2.$$
We are only interested in those values of $r$ that are sufficiently small.
Since $r\to 0$ if and only if $t\to-\iny$ then, in terms of the new coordinate $t$, we study the case when $t$ is sufficiently close to $-\iny$.

The following lemma is an important tool and relies on the eigenfunction estimates of Sogge \cite{Sog86}.

\begin{lemma}
Let $\set{c_k}$ be a sequence of numbers.
For any $\frac{2n}{n+2} \le p \le 2 \le q \le \frac{2n}{n-2}$, there exists a constant $C = C \pr{n,p, q}$ such that
\begin{align}
\norm{ \sum c_k P_k v}_{L^{q}(S)}
&\le C \norm{ \innp{c_k}}_{\ell^\iny}^{n^2\pr{\frac 1 q - \frac{n-2}{2n}}\pr{\frac{n+2}{2n} - \frac 1 p}} \pr{\sum \abs{c_k} k^{\frac{n-2}{n}} }^{n^2\pr{\frac 1 2 - \frac 1 q}\pr{\frac 1 p - \frac 1 2}} \nonumber \\
&\times \pr{\sum  c_k^2 k^{\frac{n-2}{n}}}^{ \frac{n^2} 2\brac{\pr{\frac 1 2 - \frac 1 q}\pr{\frac{n+2}{2n} - \frac 1 p}+ \pr{\frac 1 q - \frac{n-2}{2n}}\pr{\frac 1 p - \frac 1 2}}}  
\norm{v}_{L^{p}(S)}.
\label{pqSeries}
\end{align}
\label{upDown}
\end{lemma}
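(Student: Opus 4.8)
The plan is to build the inequality \eqref{pqSeries} out of two extremal pieces — an $L^p \to L^2$ bound and an $L^2 \to L^q$ bound for operators of the form $v \mapsto \sum c_k P_k v$ — and then interpolate. The starting point is the eigenfunction estimate of Sogge \cite{Sog86}: for a single spherical harmonic $Y_k \in E_k$ one has $\norm{Y_k}_{L^q(S)} \lesssim k^{\sigma(q)} \norm{Y_k}_{L^2(S)}$ with $\sigma(q) = n\pr{\frac12 - \frac1q} - \frac12$ in the range $\frac{2(n+1)}{n-1} \le q \le \frac{2n}{n-2}$ and $\sigma(q) = \frac{n-1}{2}\pr{\frac12 - \frac1q}$ for $2 \le q \le \frac{2(n+1)}{n-1}$; by duality the same exponents govern $L^p \to L^2$ with $\frac1p + \frac1q = 1$. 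Since $P_k$ is the orthogonal projection onto $E_k$, applying Sogge to $P_k v$ and summing gives, via Cauchy--Schwarz in $k$ against a suitable weight, an estimate of the shape $\norm{\sum c_k P_k v}_{L^q(S)} \lesssim \big(\sum |c_k|^2 k^{2\sigma(q) + a}\big)^{1/2}\big(\sum k^{-a}\norm{P_k v}_{L^2}^2\big)^{1/2}$, and then orthogonality $\sum \norm{P_k v}_{L^2}^2 = \norm{v}_{L^2}^2$ collapses the second factor at $a = 0$. The key point is that the exponents appearing in \eqref{pqSeries} — namely $\frac{n-2}{n}$ on the $\sum |c_k| k^{\cdots}$ and $\sum c_k^2 k^{\cdots}$ factors, and the combinatorial powers $n^2(\cdots)(\cdots)$ — are exactly what one obtains by interpolating the endpoint $q = \frac{2n}{n-2}$, $p = \frac{2n}{n+2}$ case (where Sogge's exponent is $\sigma = \frac{n-2}{2n}\cdot n = \frac{n-2}{2}$... to be matched) with the trivial $p = q = 2$ case (where the operator norm is just $\norm{\innp{c_k}}_{\ell^\infty}$ by Plancherel).

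Concretely, I would proceed as follows. First, record the two endpoint bounds: (i) at $p = q = 2$, $\norm{\sum c_k P_k v}_{L^2(S)} \le \norm{\innp{c_k}}_{\ell^\infty}\norm{v}_{L^2(S)}$ by orthogonality of the $E_k$; (ii) at $(p,q) = \big(\tfrac{2n}{n+2}, \tfrac{2n}{n-2}\big)$, use Sogge together with the Sobolev endpoint to get $\norm{\sum c_k P_k v}_{L^{2n/(n-2)}(S)} \lesssim \big(\sum |c_k| k^{(n-2)/n}\big)^{1/2}\big(\sum |c_k| k^{(n-2)/n}\norm{P_k v}_{L^{2n/(n+2)}}^2 / (\cdots)\big)$ — more precisely a bilinear bound controlled by $\sum |c_k| k^{(n-2)/n}$ on the coefficient side and $\norm{v}_{L^{2n/(n+2)}(S)}$ on the function side, with a cross term carrying $\sum c_k^2 k^{(n-2)/n}$. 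These two endpoints, together with the interpolation of the sublevel-set / Schur-test structure, are what generate the three coefficient factors $\norm{\innp{c_k}}_{\ell^\infty}$, $\sum|c_k|k^{(n-2)/n}$, $\sum c_k^2 k^{(n-2)/n}$ with the stated mixed exponents. Second, perform a two-parameter interpolation: write $\frac1p = \frac{1-\theta}{2} + \theta\frac{n+2}{2n}$ and $\frac1q = \frac{1-\eta}{2} + \eta\frac{n-2}{2n}$, so that $\theta = n\big(\frac{n+2}{2n} - \frac1p\big)\cdot\frac{2}{2}$... i.e. $\theta$ and $\eta$ are affine in $\frac1p$ and $\frac1q$ with $\theta,\eta \in [0,1]$ exactly when $p,q$ lie in the asserted ranges. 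Third, bilinear complex interpolation (or simply Hölder on the pointwise product of the three endpoint estimates raised to powers $\theta\eta$, $\theta(1-\eta) + \eta(1-\theta)$, $(1-\theta)(1-\eta)$, adjusted) yields \eqref{pqSeries}; matching the exponents $n^2\big(\frac1q - \frac{n-2}{2n}\big)\big(\frac{n+2}{2n} - \frac1p\big)$ etc. with the products of $\theta, \eta, 1-\theta, 1-\eta$ is then a bookkeeping check, since $\theta = n\big(\frac{n+2}{2n} - \frac1p\big)/\big(\frac{n+2}{2n} - \frac{n+2}{2n} + \tfrac12\big)$... the denominators are constants so the products are indeed quadratic in $(\frac1p, \frac1q)$ with the claimed leading constant $n^2$.

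The main obstacle I expect is not the interpolation bookkeeping but the \emph{endpoint bilinear estimate (ii)}: producing the precise form $\norm{\sum c_k P_k v}_{L^{2n/(n-2)}(S)} \lesssim \big(\sum|c_k|k^{(n-2)/n}\big)^{\alpha}\big(\sum c_k^2 k^{(n-2)/n}\big)^{\gamma}\norm{v}_{L^{2n/(n+2)}(S)}$ with the exact exponents forced by the final formula. This requires more than a term-by-term application of Sogge: one must handle the cross terms $\langle c_j P_j v,\, c_k P_k v\rangle$ in $L^{2n/(n-2)}$, presumably by a Littlewood--Paley or almost-orthogonality argument together with a Schur-type test against the weight $k^{(n-2)/n}$, and it is the interplay between the $\ell^1$-type sum (which controls the "worst single frequency" contribution) and the $\ell^2$-type sum (which controls the diagonal) that must be balanced to land on exactly $\frac{n-2}{n}$. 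This is precisely where the paper says it departs from \cite{DZ17}: rather than iterating Carleman estimates for $L^\pm$, one iterates Young's convolution inequality, and Lemma~\ref{upDown} is the harmonic-analytic input that makes that iteration close. Once (ii) is in hand with the right constants, steps one through three above are routine, and the restriction $\frac{2n}{n+2} \le p \le 2 \le q \le \frac{2n}{n-2}$ is exactly the requirement that both interpolation parameters $\theta,\eta$ stay in $[0,1]$, with equality cases reducing to the two endpoints already recorded.
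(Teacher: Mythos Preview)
Your overall strategy---build endpoint bounds from Sogge's estimate and orthogonality, then interpolate---matches the paper's, but you have misidentified both the endpoints and where the difficulty lies.

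First, the ``main obstacle'' you anticipate at the corner $(p,q) = \bigl(\tfrac{2n}{n+2}, \tfrac{2n}{n-2}\bigr)$ is not an obstacle at all: the paper obtains it in one line by the triangle inequality followed by Sogge's bound applied termwise,
\[
\norm{\textstyle\sum c_k P_k v}_{L^{\frac{2n}{n-2}}(S)} \le \sum |c_k|\, \norm{P_k v}_{L^{\frac{2n}{n-2}}(S)} \le C \sum |c_k|\, k^{\frac{n-2}{n}} \norm{v}_{L^{\frac{2n}{n+2}}(S)}.
\]
No cross terms, no Littlewood--Paley, no Schur test. This yields the $\ell^1$-weighted factor directly.

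Second, and more importantly, you are missing a third building block. The $\ell^2$-weighted factor $\sum c_k^2 k^{(n-2)/n}$ does not arise from cross terms at the extreme corner; it comes from a separate $L^{\frac{2n}{n+2}} \to L^2$ estimate obtained via orthogonality in $L^2$, self-adjointness of $P_k$, H\"older, and Sogge:
\[
\norm{\textstyle\sum c_k P_k v}_{L^2(S)}^2 = \sum c_k^2 \norm{P_k v}_{L^2(S)}^2 \le \sum c_k^2 \norm{P_k v}_{L^{\frac{2n}{n-2}}(S)} \norm{v}_{L^{\frac{2n}{n+2}}(S)} \le C \Bigl(\sum c_k^2 k^{\frac{n-2}{n}}\Bigr) \norm{v}_{L^{\frac{2n}{n+2}}(S)}^2.
\]
Orthogonality is what kills the cross terms here, and Sogge is applied only on the diagonal. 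By duality this also gives the $L^2 \to L^{\frac{2n}{n-2}}$ corner.

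With only your two endpoints, interpolation along a line would produce a two-factor bound, not the three-factor formula in \eqref{pqSeries}. The paper's interpolation is genuinely two-parameter over the rectangle $\tfrac{2n}{n+2} \le p \le 2$, $2 \le q \le \tfrac{2n}{n-2}$, using all four corners: $(2,2)$ gives $\norm{\innp{c_k}}_{\ell^\iny}$; the corners $\bigl(\tfrac{2n}{n+2},2\bigr)$ and $\bigl(2,\tfrac{2n}{n-2}\bigr)$ each give $\bigl(\sum c_k^2 k^{(n-2)/n}\bigr)^{1/2}$; and $\bigl(\tfrac{2n}{n+2},\tfrac{2n}{n-2}\bigr)$ gives $\sum |c_k| k^{(n-2)/n}$. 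The paper carries this out as a sequence of one-parameter interpolations (first in $p$ at $q=2$, then duality, then in $q$ at $p=\tfrac{2n}{n+2}$, then a final interpolation between the two resulting families), and the exponents in \eqref{pqSeries} are exactly the bilinear weights on these corners. Once you add the missing $L^{\frac{2n}{n+2}} \to L^2$ estimate to your list of ingredients and drop the unnecessary bilinear machinery at the far corner, the rest is indeed bookkeeping.
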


\begin{proof}
Recall that $P_k v = v_k$ is the projection of $v$ onto the space of spherical harmonics of degree $k$.
Sogge's \cite{Sog86} eigenfunction estimates state that there is a constant $C$, depending only on $n \ge 3$, such that for any $v\in L^2(S)$,
\begin{equation}
\norm{ P_k v}_{L^{\frac{2n}{n-2}}(S)}
\le Ck^{\frac{n-2}{n}} \norm{v}_{L^{\frac{2n}{n+2}}(S)}.
\label{sogg}
\end{equation}
By orthogonality, H\"older's inequality, and \eqref{sogg},
\begin{align}
\norm{ \sum c_k P_k v}^2_{L^{2}(S)}
&= \sum \norm{ c_k P_k v}^2_{L^{2}(S)}
\le \sum  c_k^2 \norm{ P_k v}_{L^{\frac{2n}{n-2}}(S)} \norm{v}_{L^{\frac{2n}{n+2}}(S)} \nonumber \\
&\le C \sum  c_k^2 k^{\frac{n-2}{n}} \norm{v}_{L^{\frac{2n}{n+2}}(S)}^2 .
\label{upS}
\end{align}
It is clear that
\begin{align}
\norm{ \sum c_k P_k v}^2_{L^{2}(S)}
&= \sum \norm{ c_k P_k v}^2_{L^{2}(S)}
= \sum  c_k^2 \norm{ P_k v}^2_{L^{2}(S)} \nonumber \\
&\le  \norm{ \innp{c_k}}^2_{\ell^\iny} \norm{v}^2_{L^{2}(S)}
\label{stayS}
\end{align}
and
\begin{align}
\norm{ \sum c_k P_k v}_{L^{\frac{2n}{n-2}}(S)}
&\le \sum \abs{c_k} \norm{ P_k v}_{L^{\frac{2n}{n-2}}(S)}
\le C \sum \abs{c_k} k^{\frac{n-2}{n}} \norm{v}_{L^{\frac{2n}{n+2}}(S)}.
\label{bigUp}
\end{align}
Interpolating \eqref{upS} and \eqref{stayS} shows that for all $\frac{2n}{n+2}\le p\le 2$, there exists $C = C \pr{n,p}$ such that
\begin{align}
\norm{\sum c_k P_k v}_{L^2(S)}
&\le C \norm{ \innp{c_k}}_{\ell^\iny}^{n\pr{\frac{n+2}{2n} - \frac{1}{p}}} \pr{\sum  c_k^2 k^{\frac{n-2}{n}}}^{\frac {n}{2}\pr{\frac 1 p - \frac 1 2}}  \norm{v}_{L^{p}(S)}.
\label{upp}
\end{align}
Using this estimate, we have
\begin{align*}
\innp{u, \sum c_k P_k v}
&= \innp{\sum c_k P_k  u, v}
\le \norm{\sum c_k P_k u}_{L^2\pr{S}} \norm{v}_{L^2\pr{S}} \\
&\le C \norm{ \innp{c_k}}_{\ell^\iny}^{n\pr{\frac{n+2}{2n} - \frac{1}{p}}} \pr{\sum  c_k^2 k^{\frac{n-2}{n}}}^{\frac {n}{2}\pr{\frac 1 p - \frac 1 2}} \norm{u}_{L^{p}(S)} \norm{v}_{L^{2}(S)}.
\end{align*}
By duality, we conclude that for all $2 \le q \le \frac{2n}{n-2}$, there exists $C = C \pr{n,q}$ such that
\begin{align}
\norm{\sum c_k P_k v}_{L^q(S)} 
&\le C \norm{ \innp{c_k}}_{\ell^\iny}^{n\pr{\frac 1 q - \frac{n-2}{2n}}} \pr{\sum  c_k^2 k^{\frac{n-2}{n}}}^{\frac n 2\pr{\frac 1 2 - \frac 1 q}} \norm{v}_{L^{2}(S)}.
\label{upq}
\end{align}
Now if we interpolate \eqref{upS} and \eqref{bigUp}, we see that for all $2 \le q \le \frac{2n}{n-2}$, there exists $C = C \pr{n,q}$ such that
\begin{align}
\norm{ \sum c_k P_k v}_{L^{q}(S)}
&\le C \pr{\sum  c_k^2 k^{\frac{n-2}{n}}}^{\frac n 2\pr{\frac 1 q - \frac{n-2}{2n}}} \pr{\sum \abs{c_k} k^{\frac{n-2}{n}} }^{n\pr{\frac 1 2 - \frac 1 q}}\norm{v}_{L^{\frac{2n}{n+2}}(S)}.
\label{up2q}
\end{align}
Finally, we interpolate \eqref{upq} and \eqref{up2q} to reach the conclusion of the lemma.
\end{proof}

While the very general form of Lemma \ref{upDown} will serve us in the proof of our proposition below, its simplest form is also quite useful.

\begin{corollary}
For any $\frac{2n}{n+2} \le p \le 2 \le q \le \frac{2n}{n-2}$, there exists a constant $C = C \pr{n,p, q}$ such that
\begin{align}
\norm{ P_k v}_{L^{q}(S)}
&\le C  k^{ \frac{n-2} 2 \pr{\frac 1 p - \frac 1 q}}
\norm{v}_{L^{p}(S)}.
\label{pqEst}
\end{align}
\label{upDownCor}
\end{corollary}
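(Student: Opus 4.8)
The plan is to obtain this as a direct specialization of Lemma \ref{upDown}: one simply takes the sequence $\set{c_k}$ to be supported at a single index. Concretely, for a fixed $k \ge 1$, first I would apply \eqref{pqSeries} to the sequence $\set{c_j}_{j \ge 0}$ with $c_k = 1$ and $c_j = 0$ for $j \ne k$. Then $\sum_j c_j P_j v = P_k v$, $\norm{\innp{c_j}}_{\ell^\iny} = 1$, and both $\sum_j \abs{c_j} j^{\frac{n-2}{n}}$ and $\sum_j c_j^2 j^{\frac{n-2}{n}}$ equal $k^{\frac{n-2}{n}}$, so the first factor in \eqref{pqSeries} becomes $1$ and the remaining two factors collapse to $k^{\frac{n-2}{n} E}$ with
\begin{equation*}
E = n^2\pr{\tfrac 1 2 - \tfrac 1 q}\pr{\tfrac 1 p - \tfrac 1 2} + \tfrac{n^2}{2}\brac{\pr{\tfrac 1 2 - \tfrac 1 q}\pr{\tfrac{n+2}{2n} - \tfrac 1 p} + \pr{\tfrac 1 q - \tfrac{n-2}{2n}}\pr{\tfrac 1 p - \tfrac 1 2}}.
\end{equation*}

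Next I would simplify $E$. Writing $a = \frac 1 p - \frac 1 2 \ge 0$ and $b = \frac 1 2 - \frac 1 q \ge 0$, so that $\frac 1 p - \frac 1 q = a + b$, the two elementary identities $\frac{n+2}{2n} - \frac 1 p = \frac 1 n - a$ and $\frac 1 q - \frac{n-2}{2n} = \frac 1 n - b$ reduce $E$ to $n^2 ab + \frac{n^2}{2}\brac{b\pr{\frac 1 n - a} + a\pr{\frac 1 n - b}} = \frac{n(a+b)}{2}$. Hence $\frac{n-2}{n}E = \frac{n-2}{2}(a+b) = \frac{n-2}{2}\pr{\frac 1 p - \frac 1 q}$, which is exactly the exponent in \eqref{pqEst}; and since the constant furnished by Lemma \ref{upDown} depends only on $n$, $p$, and $q$, so does the constant here.

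It remains to dispose of $k = 0$, which is immediate: $P_0 v$ is the constant function equal to the mean of $v$ on $S$, so $\norm{P_0 v}_{L^p(S)} \le \norm{v}_{L^p(S)}$ and $\norm{P_0 v}_{L^q(S)} = \abs{S}^{\frac 1 q - \frac 1 p}\norm{P_0 v}_{L^p(S)}$, and since $\frac{n-2}{2}\pr{\frac 1 p - \frac 1 q} \ge 0$ the estimate \eqref{pqEst} holds for $k = 0$ as well (with the convention $0^0 = 1$ when $p = q$). I do not expect any real obstacle here: the corollary is a pure specialization of Lemma \ref{upDown}, and the only thing to check is the elementary bookkeeping that collapses the two interpolation exponents into $\frac{n-2}{2}\pr{\frac 1 p - \frac 1 q}$.
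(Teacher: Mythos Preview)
Your approach is correct and is exactly what the paper intends: the corollary is stated immediately after Lemma \ref{upDown} as ``its simplest form,'' with no separate proof, and your specialization to a sequence supported at a single index together with the exponent bookkeeping $\frac{n-2}{n}E = \frac{n-2}{2}\pr{\frac 1 p - \frac 1 q}$ is precisely the intended derivation. One small remark: your handling of $k=0$ is slightly off, since for $p<q$ the right-hand side of \eqref{pqEst} vanishes at $k=0$; but this is a defect of the stated corollary rather than of your argument, and in the paper the estimate is only ever invoked for $k \ge M \ge 2$.
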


Now we prove an $L^p-L^q$ type Carleman estimate for the operator $\LP$.

\begin{proposition}
Let $\frac{2n}{n+2} \le p \le 2 \le q \le \frac{2n}{n-2}$ be such that $\frac 1 p - \frac 1 q < \frac 2 n$.
There exists a constant $C = C\pr{n,p,q}$ and a sufficiently small $R_0 \le 1$ such that for any $v\in C^{\iny}_{0}\pr{B_{R_0}(x_0)\backslash\set{x_0} }$ and any $\tau>1$, one has
\begin{equation}
\norm{ \pr{\log r}^{-1} e^{-\tau \phi\pr{r}}v}_{L^q(r^{-n}dx)}
\le  C \tau^{-\be_2} \norm{(\log r ) e^{-\tau \phi\pr{r}} r^2 \LP v}_{L^p(r^{-n} dx)},
\label{key}
\end{equation}
where $\be_2 = 1 - \frac{n}{2}\pr{\frac 1 p - \frac 1 q}$. 
\label{CarL+L-pq}
\end{proposition}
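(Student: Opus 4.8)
The plan is to adapt the polar-coordinate ODE method used for the $L^p-L^2$ estimate in \cite{DZ17} (which in turn rests on \cite{Reg99} and \cite{Jer86}), but to produce the two-parameter $L^p-L^q$ bound directly, without routing through a Sobolev inequality. First I would pass to the coordinates $x = r\om$, $t = \log r$, in which $r^{-n}\,dx = dt\,d\om$ and, by \eqref{laplace} and \eqref{use}, $r^2 \LP = e^{2t}\LP = L^+ L^-$. Substituting $v = e^{\tau\vp(t)}h$ removes the weight: since $\La$ acts only in $\om$, one has $e^{-\tau\vp}L^{\pm}(e^{\tau\vp}h) = L^{\pm}_\tau h$ with $L^{\pm}_\tau := \del_t + \frac{n-2}{2} + \tau\vp' \pm \La$ and $\vp'(t) = 1 + 2/t$, so after conjugation \eqref{key} is equivalent to the weight-free inequality
$$\norm{\abs{t}^{-1} h}_{L^q(dt\,d\om)} \le C\,\tau^{-\be_2}\, \norm{\abs{t}\, L^+_\tau L^-_\tau h}_{L^p(dt\,d\om)}$$
for $h \in C^\iny_0$ supported where $t$ is large and negative.

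Next I would decompose $h = \sum_k P_k h$ onto the eigenspaces $E_k$ of $\La$ via \eqref{ord}. On $E_k$ the operators reduce to the commuting first-order ODE operators $L^+_\tau = \del_t + \tau\vp' + (n-2+k)$ and $L^-_\tau = \del_t + \tau\vp' - k$, each inverted by an elementary integrating factor. Composing the two inverses writes the solution of $L^+_\tau L^-_\tau h_k = F_k$ as $h_k(t,\cdot) = \int G_k(t,s) F_k(s,\cdot)\,ds$, with an explicit kernel $G_k$ assembled from $e^{k(t-s)}$, $e^{-(n-2+k)(t-s)}$ and $e^{-\tau(\vp(t)-\vp(s))}$. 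This is the novelty of the approach: rather than iterating Carleman estimates for the two factors as in \cite{DZ17}, each factor is inverted by a genuine convolution-type operator in $t$, so its Lebesgue-space mapping properties come from Young's convolution inequality, and one iterates Young's inequality instead. Combining over $k$ is then done by applying Minkowski's integral inequality to move the $L^q(S)$ norm inside the $s$-integral and invoking Sogge's eigenfunction estimates in the general form of Lemma \ref{upDown} (with the kernel values $G_k(t,s)$ playing the role of the coefficients $c_k$) to collapse the angular sum; what remains is a one-dimensional convolution-type inequality in $t$, handled by a final application of Young's inequality. The hypothesis $\frac1p - \frac1q < \frac2n$, equivalently $\be_2 > 0$, is exactly what makes the series in $k$ generated by this procedure converge.

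The heart of the matter is to carry out the $t$-estimates for $G_k$ with constants sharp enough to produce $\tau^{-\be_2}$ while keeping the $k$-dependence summable, and here the two factors behave very differently. Since $\vp' > 0$ on the relevant range of $t$, the symbol of $L^+_\tau$ has real part $\tau\vp' + n - 2 + k$ bounded below, so $L^+_\tau$ is uniformly elliptic and contributes only harmless bounds; all the delicate behavior sits in $L^-_\tau$, whose symbol $i\xi - k + \tau\vp'(t)$ nearly vanishes in the resonant regime $k \approx \tau\vp'(t)$, i.e. $k \approx \tau$. I would split the analysis of $G_k$ into a non-resonant range, say $\abs{k - \tau} \gtrsim \tau^{1/2}$, where crude exponential bounds on the kernel suffice and a short computation yields the claimed power of $\tau$, and the resonant range $\abs{k-\tau} \lesssim \tau^{1/2}$. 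In the resonant range the choice of weight is essential: because $\vp''(t) = -2/t^2 \neq 0$, the resonance $k = \tau\vp'(t)$ is attained only at an isolated value of $t$, so a van der Corput / stationary-phase type estimate for the corresponding kernel recovers the loss with a constant uniform in $\tau > 1$. This non-degeneracy is exactly why the weight carries the $\log(\log r)^2$ correction (equivalently, why the $\abs{t}^{\pm1}$ weights appear in \eqref{key} and the factor $\abs{t}^{-2\tau}$ appears in $e^{-\tau\vp}$).

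The main obstacle is thus the resonant regime $k\approx\tau$: bounding the inverse of $L^-_\tau$ there with a constant that neither grows with $\tau$ nor spoils the summation over $k$ is where essentially all of the work lies. By comparison, the non-resonant kernel estimates, the bookkeeping of exponents in the iterated Young inequalities, and the passage from the frequency side back to \eqref{key} through Lemma \ref{upDown} are routine.
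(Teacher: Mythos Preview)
Your proposal is correct and follows essentially the same route as the paper: polar coordinates and conjugation, spherical-harmonic decomposition, solving the second-order ODE for each $P_k u$ as a double integral in $t$, applying Lemma \ref{upDown} with the kernel values as the coefficients $c_k$, and iterating Young's inequality in the $t$-variable. You also correctly identify the novelty (iterating Young rather than iterating Carleman estimates), the role of $\vp''\ne 0$ in the resonant regime, and the reason for the hypothesis $\frac1p-\frac1q<\frac2n$.

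The only differences are organizational. The paper's frequency split is not at $|k-\tau|\sim\tau^{1/2}$ but first at $M=\lceil 2\tau\rceil$: for $k\ge M$ a crude kernel bound plus the single-projector estimate of Corollary \ref{upDownCor} gives $\|P_ku\|_q\lesssim k^{-2+\frac n2(\frac1p-\frac1q)}\|L^+_\tau L^-_\tau u\|_p$, and it is this tail sum that requires $\frac1p-\frac1q<\frac2n$. For $k<M$ the paper makes a second, $t$-dependent split at $N(t)=\lceil\tau\vp'(t)\rceil$, using the two different solution formulas from the ODE on either side; the full Lemma \ref{upDown} then collapses the $k$-sum, with the geometric series $\sum_k e^{-|k-\tau\vp'(t)|\,|y-t|}$ producing a $|y-t|^{-\al_2}$ singularity. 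The ``stationary-phase'' mechanism you describe is in fact a real-variable Taylor bound: the quadratic term in $\vp(y)-\vp(t)$ gives $e^{-\tau(y-t)^2/t^2}\le C|t|(1+\sqrt\tau|y-t|)^{-1}$, which is where the $|t|^{\pm1}$ weights are consumed. Your framing would also work, but the paper's $t$-dependent split meshes more directly with the structure of Lemma \ref{upDown}.
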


\begin{proof}
Recalling the definitions of $t$, $\vp$, $L^\pm$ from above, proving \eqref{key} is equivalent to showing that
\begin{equation}
\norm{t^{-1} e^{-\tau \vp\pr{t}} v}_{L^q(dtd\omega)} 
\le C \tau^{-\be_2} \norm{ t e^{-\tau \vp\pr{t}} L^+L^- v}_{L^p(dtd\omega)}
\label{key-}
\end{equation}
for all $v \in C^\iny_0\pr{\pr{-\iny, t_0} \times S}$, where $\abs{t_0} = - t_0$ is sufficiently large.
To prove \eqref{key-}, we introduce the conjugated operators $L^\pm_\tau$ of $L^\pm$, defined by
$$L^\pm_\tau u = e^{-\tau \vp\pr{t}}L^\pm(e^{\tau \vp\pr{t}}u) = L^\pm + \tau \vp'\pr{t},$$
where we have used definition \eqref{use}.
Notice that
\begin{align*}
L^+_\tau L^-_\tau { u}
= L^+_\tau \pr{L^-_\tau u}
= L^+_\tau \pr{e^{-\tau \vp\pr{t}}L^-(e^{\tau \vp\pr{t}}u)}
=e^{-\tau \vp\pr{t}} L^+ L^-(e^{\tau \vp\pr{t}}u)
= \pr{L^+L^-}_\tau u.
\end{align*}
With $v=e^{\tau \vp\pr{t}}u$, inequality \eqref{key-} is equivalent to
\begin{equation}
\norm{t^{-1} u}_{L^q(dtd\omega)}\le C \tau^{-\be_2} \norm{t L^+_\tau L^-_\tau u}_{L^p(dtd\omega)}. 
\label{keyu}
\end{equation}
From \eqref{laplace}, \eqref{ord} and \eqref{use}, the operator 
$L^+_\tau L^-_\tau$ takes the form
\begin{align*}
L^+_\tau L^-_\tau &= \sum_{k\geq 0} \pr{\partial_t+\tau \vp'\pr{t} + k + n -2} \pr{\partial_t+\tau \vp'\pr{t} -k}P_k
\end{align*}
which leads to the second order differential equation
\begin{align*}
P_k L^+_\tau L^-_\tau u &=  \pr{\partial_t+ \tau \vp'\pr{t} + k + n -2} \pr{\partial_t+\tau \vp'\pr{t} -k} P_k u.
\end{align*}
For $u\in C^\iny_{0}\pr{ (-\iny, \ t_0)\times S}$, solving this ODE gives
\begin{align}
P_k u(t, \omega)
&=- \int_{t}^{\iny} \int_{- \iny}^y e^{k\pr{t-y} - \tau \pr{\vp\pr{t} - \vp\pr{y}}}  e^{-\pr{k+n-2}\pr{y-s} - \tau \pr{ \vp\pr{y} - \vp\pr{s}}} P_k L^+_\tau L^-_\tau u (s, \omega)\, ds \, dy
\nonumber \\
&= \int_{-\iny}^t  \int_{- \iny}^y e^{ k\pr{t-y} - \tau \pr{\vp\pr{t} - \vp\pr{y}}}  e^{-\pr{k+n-2}\pr{y-s} - \tau \pr{ \vp\pr{y} - \vp\pr{s}}} P_k L^+_\tau L^-_\tau u (s, \omega)\, ds \, dy.
\label{starr}
\end{align}
Both of these representations will be used below in our estimates.

Since $\disp u = \sum_{k\geq 0} P_k u$, we split the sum into two parts.
Let $M=\lceil 2 \tau\rceil$ and define $\disp P^+=\sum_{k\ge M}P_k$ and $\disp P^-=\sum_{k=0}^{M-1}P_k$.
In order to prove the \eqref{keyu}, it suffices to show that
\begin{equation}
\norm{ t^{-1} P^+ {u} }_{L^q(dtd\om)} 
\le C \tau^{-\be_2}\norm{t L_\tau^+ L_\tau^- u}_{L^p(dtd\omega)}
\label{key1}
\end{equation}
and
\begin{equation}
\norm{t^{-1} P^- u}_{L^q(dtd\om)}
\le C \tau^{-\be_2}\norm{t L_\tau^+ L_\tau^- u}_{L^p(dtd\omega)}
\label{key2}
\end{equation}
for all $u \in C^\iny_0\pr{(-\iny, \ t_0)\times S}$. 
The sum of \eqref{key1} and \eqref{key2} will yield \eqref{keyu}, which implies \eqref{key-}, proving the lemma.
We first establish \eqref{key1}.

For $k\geq M \ge 2 \tau$, we'll used the first line in \eqref{starr}.
Recalling that $\vp\pr{t} = t + \log t^2$, we see that if $\abs{t_0} \ge 4$, then
\begin{align}
\chi_{\set{y \ge t}} e^{ - \tau \pr{\vp\pr{t} - \vp\pr{y}}}
&= \chi_{\set{y \ge t}} e^{ \tau \abs{y-t} + 2 \tau \log\pr{1 - \abs{\frac{ y-t} t}}} 
\le e^{\tau \abs{y-t}} 
\nonumber \\
\chi_{\set{s \le y}} e^{ - \tau \pr{\vp\pr{y} - \vp\pr{s}}}
&= \chi_{\set{s \le y}} e^{ -\tau \abs{s-y} - 2\tau \log\pr{1 - \abs{\frac{y-s} s}}}
\le e^{ -\tau \abs{s-y} + 2\tau \abs{\frac{y-s} s}} 
\le e^{ -\frac \tau 2 \abs{s-y}} 
\label{evpBds}
\end{align}
and it follows that
\begin{align*}
\chi_{\set{y \ge t}} \chi_{\set{s \le y}} e^{ k\pr{t-y} - \tau \pr{\vp\pr{t} - \vp\pr{y}}}  e^{-\pr{k+n-2}\pr{y-s} - \tau \pr{ \vp\pr{y} - \vp\pr{s}}} 
&\le e^{-\frac k 2\abs{t-y}} e^{- k \abs{y-s}}.
\end{align*}
Taking the $L^q\pr{S}$-norm in \eqref{starr} and using this bound gives that
\begin{equation*}
\norm{P_k u(t, \cdot)}_{L^q(S)}
\le \int_{-\iny}^{\iny} \int_{-\iny}^{\iny} e^{-\frac k 2\abs{t-y}} e^{- k \abs{y-s}} \norm{P_k L^+_\tau L^-_\tau u(s, \cdot)}_{L^q(S)} \,ds \, dy.
\end{equation*}
With the aid of \eqref{pqEst}, we get
\begin{equation*}
\norm{P_k u(t, \cdot)}_{L^q(S)}
\le C k^{\frac{n-2}{2}\pr{\frac 1 p - \frac 1 q}} \int_{-\iny}^{\iny} e^{-\frac k 2 \abs{t-y}} \int_{-\iny}^{\iny} e^{-k \abs{y-s}} \norm{L^+_\tau L^-_\tau u(s,\cdot)}_{L^p(S)} \,ds \, dy.
\end{equation*}
Consecutively applying Young's inequality for convolution then yields
\begin{align*}
\norm{P_k u}_{L^q(dt d\omega)}
&\le C k^{\frac{n-2}{2}\pr{\frac 1 p - \frac 1 q}} \pr{\int_{-\iny}^{\iny} e^{-\frac{\si}{2} k|z|} dz}^{\frac{1}{\sigma}} \pr{\int_{-\iny}^{\iny} e^{-\rho k |z|} dz}^{\frac{1}{\rho}} \norm{L^+_\tau L^-_\tau u}_{L^p(dt d\om)}
\end{align*}
with $\frac{1}{\sigma}=1 + \frac 1 q -\frac{1}{a}$ and $\frac 1 \rho = 1 + \frac 1 a - \frac 1 p$ for any index $a$ in the appropriate range.
Since
$$ \pr{\int_{-\iny}^{\iny} e^{- \frac \si 2 k|z|}}^{\frac{1}{\sigma}}\le C k^{\frac{1}{a} - 1 - \frac 1 q} \quad \text{ and } \quad
\pr{\int_{-\iny}^{\iny} e^{-\rho k |z|} dz}^{\frac{1}{\rho}} \le C k^{\frac 1 p - \frac 1 a - 1}$$
then
\begin{equation}
\norm{P_k u}_{L^q(dt d\omega)}
\le C k^{ - 2 + \frac{n}{2}\pr{\frac 1 p - \frac 1 q}} \norm{L^+_\tau L^-_\tau u}_{L^p(dt d\om)}.
\label{highTerms}
\end{equation}
Summing up $k\ge M$ shows that
\begin{align*}
\norm{P^+ u}_{L^q(dt d\omega)}
&\le \sum_{k\ge M} \norm{P_k u}_{L^q(dt d\omega)}
\le C  \sum_{k\ge M}  k^{-2 + \frac{n}{2}\pr{\frac 1 p - \frac 1 q}} \norm{L^+_\tau L^-_\tau u}_{L^p(dt d\om)} \\ 
&\le C \tau^{-1 + \frac{n}{2}\pr{\frac 1 p - \frac 1 q}}  \norm{L^+_\tau L^-_\tau u}_{L^p(dt d\om)},
\end{align*}
where we have used the condition that $\frac 1 p - \frac 1 q < \frac 2 n$ to conclude that the series converges.
Since $\abs{t_0}$ is assumed to be sufficiently large, \eqref{key1} follows from this bound and the definition of $\be_2$.

Now fix $t \in \pr{-\iny, - \abs{t_0}}$ and set $N=\lceil\tau \vp'\pr{t} \rceil$.
We'll estimate $\norm{ P^- u}_{L^2(dtd\omega)}$ by summing in two parts; from $N$ to $M-1$ and then from $0$ to $N-1$.
That is, we'll sum the parts where $k > N$ and $k < N$ separately using each of the representations from \eqref{starr}.
An application Taylor's theorem shows that for all $t, y \in \pr{-\iny, t_0}$
\begin{align*}
\vp\pr{y}-\vp\pr{t}
&= \vp'\pr{t}\pr{y-t} + \frac{1}{2}\vp''\pr{y_0}\pr{y-t}^2
=  \vp'\pr{t}\pr{y-t} - \frac{1}{y_0^2}\pr{y-t}^2,
\end{align*}
where $y_0$ is some number between $y$ and $t$.
This more refined estimate shows that
\begin{align*}
&\chi_{\set{y \ge t}} e^{ - \tau \pr{\vp\pr{t} - \vp\pr{y}}}
\le e^{ \tau  \vp'\pr{t}\abs{y-t} - \frac{\tau}{t^2}\pr{y-t}^2} \\
&\chi_{\set{y \le t}} \chi_{\set{s \le y}} e^{ - \tau \pr{\vp\pr{t} - \vp\pr{y}}}
\le e^{- \tau \vp'\pr{t}\abs{y-t} - \frac{\tau}{s^2}\pr{y-t}^2}.
\end{align*}
By combining these observations with \eqref{evpBds}, we see that
\begin{align}
& \chi_{\set{y \ge t}} \chi_{\set{s \le y}} e^{ k\pr{t-y} - \tau \pr{\vp\pr{t} - \vp\pr{y}}}  e^{-\pr{k+n-2}\pr{y-s} - \tau \pr{ \vp\pr{y} - \vp\pr{s}}} \nonumber \\
&\le e^{ -\pr{k -  \tau \vp'\pr{t}}\abs{y-t} - \frac{\tau}{t^2}\pr{y-t}^2 -\pr{k + \frac \tau 2}\abs{y-s}}
\label{highSum} \\
& \chi_{\set{y \le t}} \chi_{\set{s \le y}} e^{ k\pr{t-y} - \tau \pr{\vp\pr{t} - \vp\pr{y}}}  e^{-\pr{k+n-2}\pr{y-s} - \tau \pr{ \vp\pr{y} - \vp\pr{s}}} \nonumber \\
&\le e^{ - \pr{\tau \vp'\pr{t} - k}\abs{y-t} - \frac{\tau}{s^2}\pr{y-t}^2 -\pr{k + \frac \tau 2}\abs{y-s}} .
\label{lowSum}
\end{align}
From the first line of \eqref{starr}, we sum over $k$ and use \eqref{highSum} to get
\begin{align*}
\sum^{M-1}_{k=N} P_k u(t, \om)
&\le \iint \sum^{M-1}_{k=N}  e^{ -\pr{k -  \tau \vp'\pr{t}}\abs{y-t} - \frac{\tau}{t^2}\pr{y-t}^2 - \tau \abs{y-s}} P_k L^+_\tau L^-_\tau u (s, \om) ds \, dy.
\end{align*}
Then we apply an $L^q\pr{S}$-norm to get
\begin{align}
& \norm{\sum^{M-1}_{k=N} P_k u(t, \cdot)}_{L^q\pr{S}} \nonumber \\
&\le \iint \norm{\sum^{M-1}_{k=N}  e^{ -\pr{k -  \tau \vp'\pr{t}}\abs{y-t} - \frac{\tau}{t^2}\pr{y-t}^2 - \tau \abs{y-s}} P_k L^+_\tau L^-_\tau u (s, \cdot)}_{L^q\pr{S}} ds \, dy.
\label{normMid}
\end{align}
For $0 \le k\le N-1$, we use the second line from \eqref{starr} in combination with \eqref{lowSum} to similarly obtain
\begin{align}
& \norm{\sum^{N-1}_{k=0} P_k u(t, \cdot)}_{L^q\pr{S}} \nonumber \\
&\le  \iint \norm{\sum^{N-1}_{k=0}  e^{ - \pr{\tau \vp'\pr{t} - k}\abs{y-t} - \frac{\tau}{s^2}\pr{y-t}^2 - \frac \tau 2 \abs{y-s}} P_k L^+_\tau L^-_\tau u (s, \cdot)}_{L^q\pr{S}} ds \, dy.
\label{normlow}
\end{align}
For these inner norms, Lemma \ref{upDown} is applicable with $c_k = e^{ -\pr{k -  \tau \vp'\pr{t}}\abs{y-t} - \frac{\tau}{t^2}\pr{y-t}^2 - \tau \abs{y-s}}$ and $c_k = e^{ - \pr{\tau \vp'\pr{t} - k}\abs{y-t} - \frac{\tau}{s^2}\pr{y-t}^2 - \frac \tau 2 \abs{y-s}}$.
Estimate \eqref{pqSeries} shows that
\begin{align*}
&\norm{\sum^{M-1}_{k=N} e^{ -\pr{k -  \tau \vp'\pr{t}}\abs{y-t} - \frac{\tau}{t^2}\pr{y-t}^2 - \tau \abs{y-s}} P_k L^+_\tau L^-_\tau u (s, \cdot)}_{L^q\pr{S}} \\
&\le C \tau^{\frac{n-2}{2} \pr{\frac 1 p - \frac 1 q}}  
e^{- \tau \pr{\abs{y-s} + \frac{\pr{y-t}^2}{t^2}}} 
\pr{\sum^{M-1}_{k=N} {e^{ -\pr{k -  \tau \vp'\pr{t}}\abs{y-t}}} }^{n^2\pr{\frac 1 2 - \frac 1 q}\pr{\frac 1 p - \frac 1 2}} \nonumber \\
&\times \pr{\sum^{M-1}_{k=N}  e^{ -2\pr{k -  \tau \vp'\pr{t}}\abs{y-t} } }^{ \frac{n^2} 2\brac{\pr{\frac 1 2 - \frac 1 q}\pr{\frac{n+2}{2n} - \frac 1 p}+ \pr{\frac 1 q - \frac{n-2}{2n}}\pr{\frac 1 p - \frac 1 2}}}  
\norm{L^+_\tau L^-_\tau u (s, \cdot)}_{L^p\pr{S}} \\
&\le C \tau^{\al_1}  
\pr{\frac{1}{\abs{y-t} } }^{\al_2} 
e^{- \tau \pr{\abs{y-s} + \frac{\pr{y-t}^2}{t^2}} } 
\norm{L^+_\tau L^-_\tau u (s, \cdot)}_{L^p\pr{S}} ,
\end{align*}
where $\al_1 = \frac{n-2}{2} \pr{\frac 1 p - \frac 1 q}$, $\al_2 =  \frac{n} 2 \pr{\frac 1 p - \frac 1 q}$.
Similarly, \eqref{pqSeries} shows that
\begin{align*}
&\norm{\sum^{N-1}_{k=0}  e^{ - \pr{\tau \vp'\pr{t} - k}\abs{y-t} - \frac{\tau}{s^2}\pr{y-t}^2 - \frac \tau 2 \abs{y-s}} P_k L^+_\tau L^-_\tau u (s, \cdot)}_{L^q\pr{S}} \\
&\le C \tau^{\al_1}  
\pr{\frac{1}{\abs{y-t} } }^{\al_2} 
e^{- \tau \pr{\frac{\abs{y-s}}2 + \frac{\pr{y-t}^2}{s^2}} } 
\norm{L^+_\tau L^-_\tau u (s, \cdot)}_{L^p\pr{S}}.
\end{align*}
We see that
\begin{align*}
e^{- \tau \frac{\pr{y-t}^2}{t^2} } 
\le C_j \pr{1+\frac{\tau}{t^2}\pr{y-t}^2}^{-j}  
\end{align*}
for all $j\geq 0$ so that with $j=\frac{1}{2}$, we have
\begin{equation*}
e^{- \tau \frac{\pr{y-t}^2}{t^2} } 
\le C\abs{t} \pr{1+\sqrt{\tau}\abs{y-t}}^{-1}.
\end{equation*}
Similarly,
\begin{equation*}
e^{- \tau \frac{\pr{y-t}^2}{s^2} } 
\le C\abs{s} \pr{1+\sqrt{\tau}\abs{y-t}}^{-1}.
\end{equation*}
Combining \eqref{normMid} and \eqref{normlow} with the bounds produced above shows that
\begin{align*}
\norm{\sum^{M-1}_{k=N} P_k u(t, \cdot)}_{L^q\pr{S}}
&\le C \tau^{\al_1} \iint e^{- \tau \abs{y-s} } 
\frac{\abs{t}}{ \abs{y-t}^{\al_2}\pr{1+\sqrt{\tau}\abs{y-t}}}
\norm{L^+_\tau L^-_\tau u (s, \cdot)}_{L^p\pr{S}} ds \, dy
\end{align*}
and
\begin{align*}
\norm{\sum^{N-1}_{k=0} P_k u(t, \cdot)}_{L^q\pr{S}} 
&\le C \tau^{\al_1} \iint e^{-\frac{\tau}{2}  \abs{y-s}} 
\frac{\abs{s}}{\abs{y-t}^{\al_2}\pr{1+\sqrt{\tau}\abs{y-t}}}
\norm{L^+_\tau L^-_\tau u (s, \cdot)}_{L^p\pr{S}} ds \, dy.
\end{align*}
Now we sum and use the fact that $s, t \in \pr{-\iny, t_0}$ where $\abs{t_0}$ is sufficiently large to conclude that
\begin{align*}
\norm{{t}^{-1} P^- u(t, \cdot)}_{L^q\pr{S}} 
&\le C \tau^{\al_1 + \frac{\al_2}2} \int \frac{\pr{\sqrt \tau \abs{y-t}}^{-\al_2} }{\pr{1+\sqrt{\tau}\abs{y-t}}} \int  e^{-\frac{\tau}{2}  \abs{y-s}}
\norm{\abs{s}L^+_\tau L^-_\tau u (s, \cdot)}_{L^p\pr{S}} ds \, dy.
\end{align*}
Repeated applications of Young's inequality for convolution shows that
\begin{align*}
\norm{{t}^{-1} P^- u}_{L^q\pr{dt d\om}} 
&\le C \tau^{\al_1 + \frac{\al_2}{2}}  
\pr{\int \frac{\pr{\sqrt \tau \abs{\zeta}}^{-\si \al_2}}{\pr{1+\sqrt{\tau}\abs{\zeta}}^\si} d\zeta }^{\frac 1 \si}
\pr{\int e^{-\frac{\rho}{2} \tau \abs{z}}dz }^{\frac 1 \rho}
\norm{t L^+_\tau L^-_\tau u}_{L^p\pr{dt d\om}},
\end{align*}
where $\frac 1 \si = 1 + \frac 1 q - \frac 1 a$ and $\frac 1 \rho = 1 + \frac 1 a - \frac 1 p$ for any $a$ in the appropriate range.
We choose $a = q$ and then $\si = 1$.
Since $\al_2 \in \pr{0, 1}$, then
\begin{align*}
\pr{\int \frac{\pr{\sqrt \tau \abs{\zeta}}^{-\si \al_2}}{\pr{1+\sqrt{\tau}\abs{\zeta}}^\si} d\zeta }^{\frac 1 \si}
&= \int \frac{\pr{\sqrt \tau \abs{\zeta}}^{-\al_2}}{\pr{1+\sqrt{\tau}\abs{\zeta}}} d\zeta 
= C \tau^{- \frac 1 2}.
\end{align*}
Moreover, $\disp \pr{\int e^{-\frac{\rho}{2} \tau \abs{z}}dz }^{\frac 1 \rho} = C \tau^{-\frac 1 \rho}$ where $-\frac 1 \rho = \frac 1 p - \frac 1 q - 1$.
Since $\al_1 + \frac{\al_2}{2} - \frac 1 2 - \frac 1 \rho = - \frac 3 2 + \frac{3n}{4} \pr{\frac 1 p - \frac 1 q}$, we conclude that
\begin{align}
\norm{{t}^{-1} P^- u}_{L^q\pr{dt d\om}} 
&\le C \tau^{ - \frac 3 2 + \frac{3n}{4} \pr{\frac 1 p - \frac 1 q}} \norm{t L^+_\tau L^-_\tau u}_{L^p\pr{dt d\om}}.
\label{key22}
\end{align}
Since $\frac 1 p - \frac 1 q < \frac 2 n$ implies that $- \frac 3 2 + \frac{3n}{4} \pr{\frac 1 p - \frac 1 q} \le - 1 + \frac n 2  \pr{\frac 1 p - \frac 1 q} = - \be_2$, then \eqref{key2} follows, completing the proof.
\end{proof}

To prove Theorem \ref{Carlpq}, we interpolate Proposition \ref{CarL+L-pq} at $q = \frac{2n}{n-2}$ with the $q = 2$ version of Theorem \ref{CarlpqO} that originally appeared in \cite{DZ17}.

\begin{proof}[Proof of Theorem \ref{Carlpq}]
Let $u\in C^{\iny}_{0}\pr{B_{R_0}(x_0)\backslash\set{x_0} }$.
Since Theorem \ref{CarlpqO} implies that
\begin{align*}
\tau^{\be_1} \norm{ \pr{\log r}^{-1} e^{-\tau \vp\pr{r}} r \gr u}_{L^2(r^{-n}dx)} 
&\le C \norm{(\log r ) e^{-\tau \vp\pr{r}} r^2 \LP u}_{L^p(r^{-n} dx)},
\end{align*}
then it suffices to show that
\begin{align*}
& \tau^\be \norm{\pr{\log r}^{-1} e^{-\tau \phi\pr{r}}u}_{L^q(r^{-n}dx)}
\le  \norm{(\log r ) e^{-\tau \phi\pr{r}} r^2 \LP u}_{L^p(r^{-n} dx)}.
\end{align*}
For any $q \in \brac{2, \frac{2n}{n-2}}$, we write $q = 2 \te + \frac{2n}{n-2}\pr{1 - \te}$ for some $\te \in \brac{0,1}$.
An application of H\"older's inequality followed by estimate \eqref{mainCarO} with $q = 2$ applied to the first term in the product and estimate \eqref{key} with $q = \frac{2n}{n-2}$ applied to the second term in the product shows that
\begin{align*}
& \norm{\pr{\log r}^{-1} e^{-\tau \phi\pr{r}}u}_{L^q(r^{-n}dx)} \\
\le& \norm{\pr{\log r}^{-1} e^{-\tau \phi\pr{r}}u}_{L^2(r^{-n}dx)}^{\frac{2 \te}{q}} 
\norm{\pr{\log r}^{-1} e^{-\tau \phi\pr{r}}u}_{L^{\frac{2n}{n-2}}(r^{-n}dx)}^{\frac{2n\pr{1-\te}}{\pr{n-2}q}} \\
\le& \pr{C \tau^{-\be_0} \norm{(\log r ) e^{-\tau \phi\pr{r}} r^2 \LP u}_{L^p(r^{-n} dx)} }^{\frac{2 \te}{q}}
\pr{C \tau^{-\be_2} \norm{(\log r ) e^{-\tau \phi\pr{r}} r^2 \LP u}_{L^p(r^{-n} dx)} }^{\frac{2n\pr{1-\te}}{\pr{n-2}q}} \\
=& C \tau^{-\be_0\frac{2 \te}{q}-\be_2{\frac{2n\pr{1-\te}}{\pr{n-2}q}}}  \norm{(\log r ) e^{-\tau \phi\pr{r}} r^2 \LP u}_{L^p(r^{-n} dx)}.
\end{align*}
A computation shows that $\be_0\frac{2 \te}{q} + \be_2{\frac{2n\pr{1-\te}}{\pr{n-2}q}} = \be$, and the conclusion follows.
\end{proof}

%
%
%
%
%
%

\def\cprime{$'$}


\end{document}